\documentclass[11pt,final]{article}
\usepackage{a4}
\usepackage{amsmath}%
\usepackage{amstext}%
\usepackage{amssymb}%
\usepackage{amsthm}
\usepackage{comment}
\usepackage{listings}

\usepackage[final]{graphicx}
\usepackage{subcaption}

\usepackage{graphicx}
\usepackage{epstopdf}

\usepackage{xcolor}

\setcounter{MaxMatrixCols}{10}

\newtheorem{theorem}{Theorem}

\newtheorem{lemma}[theorem]{Lemma}

\newenvironment{remark}{\rem\rm}{\endrem}

\newcounter{unnumber}

\renewenvironment{proof}{\prf\rm}{\hfill{$\blacksquare$}\endprf}
\newcommand{\R}{\mathbb{R}}%
\newcommand{\N}{\mathbb{N}}%
\newcommand{\e}{\varepsilon}%
\newcommand{\ol}{\overline}%

\newcommand{\n}{{\nabla}}

\newcommand{\To}{\longrightarrow}
\def\a{\alpha}

\def\b{\beta}
\def\e{\epsilon}
\def\d{\delta}
\def\t{\theta}
\def\g{\gamma}

\def\<{\langle}
\def\>{\rangle}
\DeclareMathOperator*\proj{proj}%
\DeclareMathOperator*\argmin{argmin}

\textwidth17.5cm \textheight23.5cm
\oddsidemargin-0.5cm
\topmargin-1cm

\author{
Szil\'{a}rd Csaba L\'{a}szl\'{o} \thanks{Technical University of Cluj-Napoca, Department of Mathematics, Str. Memorandumului nr. 28, 400114 Cluj-Napoca, Romania, e-mail: slaszlo@math.utcluj.ro}}

\title{On the strong convergence of the trajectories  of a Tikhonov regularized second order dynamical system with asymptotically vanishing damping}

\begin{document}

\maketitle

\noindent \textbf{Abstract.}  This paper deals with a  second order dynamical system with vanishing damping that contains a Tikhonov regularization term, in connection  to the minimization problem of a convex Fr\'echet differentiable function $g$.
 We show that for appropriate Tikhonov regularization parameters the  value of the objective function in a generated trajectory converges fast to the global minimum of the objective function and a  trajectory  generated by the dynamical system converges weakly  to a minimizer of the objective function. We also  obtain the fast convergence of the velocities towards zero and some integral estimates. Nevertheless, our main goal is to extend and improve some recent results obtained in \cite{ABCR} and \cite{AL-nemkoz} concerning the strong convergence of the generated trajectories to an element of minimal norm from the $\argmin$ set of the objective function $g$. Our analysis also reveals that the damping coefficient and the Tikhonov regularization coefficient are strongly correlated.
\vspace{1ex}

\noindent \textbf{Key Words.}  convex optimization,  continuous second order dynamical system, Tikhonov regularization, strong convergence, convergence rate \vspace{1ex}

\noindent \textbf{AMS subject classification.}  34G20, 47J25, 90C25, 90C30, 65K10

\section{Introduction}

Let $\mathcal{H}$ be a Hilbert space endowed with the scalar product $\< \cdot,\cdot\>$ and norm $\|\cdot\|$ and let
$g:\mathcal{H}\To\R$ be  a convex continuously differentiable function whose solution set $\argmin g$ is  nonempty. Assume further, that $\nabla g$ is Lipschitz continuous on  bounded sets. Consider the minimization problem
$$(P)\,\,\,\inf_{x\in\mathcal{H}}g(x)$$
in connection to the second order dynamical system
\begin{align}\label{DynSys}
\begin{cases}
\ddot{x}(t) + \frac{\a}{t^q} \dot{x}(t) + \nabla g\left(x(t)\right) +\frac{a}{t^p}x(t)=0\\
x(t_0) = u_0, \,
\dot{x}(t_0) = v_0,
\end{cases}
\end{align}
where $t_0 > 0$, $(u_0,v_0) \in \mathcal{H} \times \mathcal{H}$ and $\a,\,q,\,a,\,p>0.$

 First of all, note that the term $\frac{a}{t^p}x(t)$ is a Tikhonov regularization term, which may assure the strong convergence of a generated trajectory to the minimizer of minimal norm of the objective function $g.$
For further insight into the Tikhonov regularization techniques we refer to  \cite{ACR,AC,att-com1996,AL-nemkoz,BCL,BGMS,CPS}.

The case $q=1$ was intensively studied in the literature. Indeed, in \cite{ACR} Attouch, Chbani and Riahi showed that for $q=1,$ $p>2,$ $a=1$ and $\a>3$ the generated trajectories of \eqref{DynSys} converge weakly to a minimizer of $g$. Further, one has $g(x(t)-\min g=o\left(\frac{1}{t^2}\right).$ On the other hand, if $p<2$, then the strong convergence result $\liminf_{t\to+\infty}\|x(t)-x^*\|=0$ holds, where $x^*$ is the element of minimum norm  from $\argmin g.$ The previous result is also true if $p=2$ and  $a>\frac{4}{9}\a(\a-3).$ Similar results have been obtained in \cite{BCL} and \cite{AL-siopt} for some second order dynamical systems with (implicit) Hessian driven damping and Tikhonov regularization term (see also \cite{att-p-r-jde2016,ALP}). The case $p=2$ seems to be critical in the sense that separates the  case when one obtains fast convergence of the function values and weak convergence of the trajectories to a minimizer, and the case when the strong convergence of the trajectories to a minimizer of minimum norm is ensured. However, recently Attouch and L\'aszl\'o in \cite{AL-nemkoz} succeeded to obtain both rapid convergence towards the infimal value of $g$, and the strong convergence of the trajectories towards the element of minimum norm of the set of minimizers of $g.$ More precisely, in \cite{AL-nemkoz} it is shown that if $q=1,$ $p=2,$ $a>0$ and $\a>3$ then
$\displaystyle g\left(x(t)\right) - \min g =O\left(\frac{1}{t^2}\right) \mbox{ as }t\to+\infty.$ Further, the trajectory $x$ is bounded,
$\displaystyle \|\dot{x}(t)\| =O\left(\frac{1}{t}\right) \mbox{ as }t\to+\infty$, and
there is  strong convergence to the minimum norm solution $x^*$, i.e.
$ \liminf_{t \to \infty}{\| x(t) - x^\ast \|} = 0.$ A similar result has been obtained in \cite{AL-siopt} for a second order dynamical systems with implicit Hessian driven damping.

We emphasize that for $q=1$ the dynamical system \eqref{DynSys} is a Tikhonov regularized version of the second order dynamical system with vanishing damping,  studied by Su-Boyd-Cand\`es \cite{su-boyd-candes} in connection to the optimization problem $(P)$, that is,
$$\rm{(HBS)}\,\,\,\,\,\ddot{x}(t) + \frac{\a}{t} \dot{x}(t) + \nabla g\left(x(t)\right)=0,\,x(t_0) = u_0, \,\dot{x}(t_0) = v_0,\,u_0,v_0\in\mathcal{H}.$$
 It is obvious that the latter system can be obtained from \eqref{DynSys} by taking $q=1$ and $a=0$. 
 
 According to \cite{su-boyd-candes}, the trajectories generated by (HBS) assure fast minimization property of order $\mathcal{O}\left(1/t^2\right)$ for the decay $g(x(t))-\min g,$ provided $\a\ge3$. For $\alpha >3$, it has been shown by Attouch-Chbani-Peypouquet-Redont \cite{att-c-p-r-math-pr2018} that each trajectory generated by (HBS) converges weakly to a minimizer of the objective function $g$. Further, it is shown in \cite{AP} and \cite{May} that the asymptotic convergence rate of the values is actually  $o(1/t^2)$.

The case  $ \alpha = 3 $ corresponds to Nesterov's historical algorithm \cite{nesterov83} as it  was emphasized in \cite{su-boyd-candes}, more precisely for  $\alpha =3$, (HBS)  can be seen as a continuous version of the accelerated gradient method of Nesterov, (see also \cite{L}).

However, the case $ \alpha = 3 $ is critical, that is, the convergence of the trajectories generated by (HBS) remains an open problem.
 The subcritical case $\alpha \leq 3$ has  been examined by Apidopoulos-Aujol-Dossal \cite{AAD}  and Attouch-Chbani-Riahi \cite{ACR-subcrit}, with the convergence rate of the objective values $\displaystyle{\mathcal{O}\Big( t^{-\frac{2\alpha}{3}}}\Big)$.

 When the objective function $g$ is  not convex, the convergence of the  trajectories generated by  (HBS)  is a largely  open question.  Recent progress has been made in \cite{BCL1}, where the convergence of the trajectories  of a system, which can be considered as a perturbation of (HBS) has been obtained in a non-convex setting.
For other results concerning the dynamical system (HBS) and its extensions  we refer to \cite{ACR-subcrit,cabotenglergadat,JM}.

 Let us mention that for the particular case $q=\frac{p}{2},$  \eqref{DynSys} becomes (TRIGS) the dynamical system  introduced recently in \cite{AL-nemkoz} and studied further in \cite{ABCR}.

Note that in \cite{AL-nemkoz} it is shown that for $\frac23<p<2,$ $q=\frac{p}{2}$, $\a>0$ and $a=1$   a trajectory of the dynamical system \eqref{DynSys} satisfies the following:
$
g(x(t))-\min g= \mathcal O \left(  \frac{1}{t^{\frac{3p}{2}-1}}   \right)$  and $\liminf_{t \to \infty}{\| x(t) - x^\ast \|} = 0.
$
 
 The previous result has been improved in \cite{ABCR}, where the authors showed that for $0<q<1,\,p=2q,$ $\a>0$ and $a=1$ one has:
 $\displaystyle g\left(x(t)\right) - \min g =O\left(\frac{1}{t^{2q}}\right) \mbox{ as }t\to+\infty,$
$\displaystyle \|\dot{x}(t)\| =O\left(\frac{1}{t^{\frac{q+1}{2}}}\right) \mbox{ as }t\to+\infty$
 and
 $ \lim_{t \to \infty}{\| x(t) - x^\ast \|} = 0.$
 More precisely, by denoting $x_t$ the unique minimizer of the strongly convex function $g(x)+\frac{1}{2t^p}\|x\|^2$, it is shown in \cite{ABCR} that $\| x(t) - x_t \|=O\left(\frac{1}{t^{\frac{1-q}{2}}}\right) \mbox{ as }t\to+\infty$ and this implies that
$$ \lim_{t \to \infty}{\| x(t) - x^\ast \|} = 0.$$

The main  goal of this paper  is to extend and improve the results obtained in \cite{AL-nemkoz} and \cite{ABCR} for the general case $0<q<1$ and $p,\a,a>0.$
Since our analysis shows that the parameters $q,\,p$ and $a$ are strongly related in order to give a better perspective of
the results obtained in this paper, we emphasize the following.
\begin{enumerate}
\item If $0<q<1,\,0<p<q+1$   and  $x^*=\proj_{\argmin g} 0$ is the minimum norm element from $\argmin g$, then the following results hold.
\begin{itemize}
\item[(i)] $ \lim\limits_{t \to \infty} \| x(t) - x^\ast \| = 0.$

\item[(ii)] If $\frac{3q+1}{2}\le p<q+1$ then $\|\dot{x}(t)\|=O\left(\frac{1}{t^{2q-p+1}}\right)\mbox{ as }t\to+\infty.$ 
Further,  if $\frac{3q+1}{2}\le p\le \frac{4q+2}{3}$, then
$g(x(t))-\min g=O\left(\frac{1}{t^{p}}\right)\mbox{ as }t\to+\infty$ and for $\frac{4q+2}{3}< p< q+1$ one has
$g(x(t))-\min g=O\left(\frac{1}{t^{4q-2p+2}}\right)\mbox{ as }t\to+\infty.$

\item[(iii)] If $0<p< \frac{3q+1}{2}$, then
$\|\dot{x}(t)\|=O\left(\frac{1}{t^{\frac{p+1-\max(q,p-q)}{2}}}\right)\mbox{ as }t\to+\infty$
and $g(x(t))-\min g=O\left(\frac{1}{t^p}\right)\mbox{ as }t\to+\infty.$
\end{itemize}

\item  If $0<q<1,\,q+1< p<2,$ or $p=2$ and $a\ge q(1-q)$ then the trajectory $x(t)$ is bounded and following results hold.
\begin{itemize}
\item[(i)] $\displaystyle{\int_{t_0}^{+\infty} t^q\|\dot{x}(t)\|^2dt<+\infty\mbox{ and } \int_{t_0}^{+\infty} t^{q} \left( g\left(x(t)\right) - \min g\right)dt<+\infty}.$

\item[(ii)] $\displaystyle{\| \dot{x}(t) \|=o\left(\frac{1}{t^q}\right) \mbox{ as }t\to+\infty\mbox{ and } g\left(x(t)\right) - \min g =o\left(\frac{1}{t^{2q}}\right) \mbox{ as }t\to+\infty.}$

\item[(iii)] For every  $x^*\in\argmin g$,  there  exists  the limit $\lim_{t\to+\infty}\|x(t)-x^*\|.$ Even more, the trajectory  $x(t)$ converges weakly, as $t\to+\infty$, to an element of $\argmin g$.
\end{itemize}

\item The case $p=q+1$, $0<q<1$ is critical in the sense that separates the cases of weak and strong convergence of the trajectories. In this case we could not obtain any convergence result for the generated trajectories, however we show that some pointwise estimates hold.
\end{enumerate}

Observe that  our results considerably extend the results  obtained in \cite{ABCR}, where only the case $q=\frac{p}{2}$ was considered. Of course for this instance we reobtain the results from \cite{ABCR}. Further, our analysis reveals that the choice $p=2q$ in the dynamical system \eqref{DynSys} is not necessarily optimal. Indeed, for a fixed $q$ we may obtain the convergence rate of order
$O\left(\frac{1}{t^{\frac{4q+2}{3}}}\right)$ for the decay $g(x(t))-\min g$, meanwhile in \cite{ABCR} only the rate $O\left(\frac{1}{t^{2q}}\right)$ has been obtained. On the other hand, if we fix $p\in]0,2[$, then for every $q\in\left]\max\left(0,\frac{2p-1}{3}\right),1\right[$  one has $g(x(t))-\min g=O\left(\frac{1}{t^{p}}\right),$ hence we can choose infinitely many damping coefficients in order to obtain the same convergence rate as in case $q=\frac{p}{2}.$ These features of our dynamical system \eqref{DynSys} will also be underlined  via some numerical experiments.

The paper is organized as follows. In the next section we show the weak convergence of the trajectories generated by the dynamical system \eqref{DynSys} to a minimizer of the objective function $g.$ Pointwise and integral estimates for the velocity and decay $g\left(x(t)\right) - \min g$ are also obtained. In section 3 we give sufficient conditions that assure the strong convergence of the trajectories generated by the dynamical system \eqref{DynSys} to the minimum norm element from $\argmin g.$ Pointwise estimates are also obtained under the same assumptions. In section 4 we present some numerical experiment in order to give a better insight on the behaviour of a trajectory generated by the dynamical system \eqref{DynSys}. Finally we conclude our paper by emphasizing some perspectives.

\section{Asymptotic analysis of the trajectories generated by the dynamical system \eqref{DynSys}}

Existence and uniqueness of a $C^2([t_0,+\infty ),\mathcal{H })$ global solution of the dynamical system
\eqref{DynSys} can be shown via the classical Cauchy-Lipschitz-Picard theorem by rewriting
\eqref{DynSys} as a first order system in the product space $\mathcal{H} \times  \mathcal{H}$, see Theorem \ref{Cauchy-weel-posed} from Appendix. In this
section we carry out the asymptotic analysis concerning the trajectories $x(t)$ generated by
the dynamical system \eqref{DynSys}. A new feature of our analysis is that we provide the integral estimates $\int_{t_0}^{+\infty} t^q\|\dot{x}(t)\|^2dt<+\infty$ and  $\int_{t_0}^{+\infty} t^{q} \left( g\left(x(t)\right) - \min g\right)dt<+\infty.$
 The convergence rates
$\| \dot{x}(t) \|=O\left(\frac{1}{t^q}\right) \mbox{ as }t\to+\infty$ and $ g\left(x(t)\right) - \min g =O\left(\frac{1}{t^{2q}}\right) \mbox{ as }t\to+\infty$ are also obtained and it is shown that the trajectory is bounded. Based on these results we are able to show  for every $x^*\in\argmin g$  the existence of the limit $\lim_{t\to+\infty}\|x(t)-x^*\|.$ Finally, we obtain '$o$' estimates for the decay $g\left(x(t)\right) - \min g$ and the velocity $\| \dot{x}(t) \|$ and we show that a trajectory converges weakly to a minimizer of the objective function $g.$

In the next result we show that pointwise estimates can be obtained in the following general cases.

\begin{theorem}\label{Oestimates}
Assume that $0<q<1,$  $\a>0$, $0<p\le 2$ and for $p=2$ one has $a\ge q(1-q)$.
Let $t_0 > 0$ and  for some starting points $u_0,v_0\in\mathcal{H}$ let $x : [t_0, \infty) \to \mathcal{H}$ be the unique global solution of  (\ref{DynSys}).
Then, the  following results hold.
\begin{itemize}
\item[(i)] If $0<q<\frac{p}{2}\le 1$ then $g\left(x(t)\right) - \min g =O\left(\frac{1}{t^{2q}}\right) \mbox{ as }t\to+\infty\mbox{ and } \| \dot{x}(t) \|=O\left(\frac{1}{t^q}\right) \mbox{ as }t\to+\infty.$
\item[(ii)] If $\frac{p}{2}\le q< 1$ then $g\left(x(t)\right) - \min g =O\left(\frac{1}{t^{p}}\right) \mbox{ as }t\to+\infty\mbox{ and } \| \dot{x}(t) \|=O\left(\frac{1}{t^{\frac{p}{2}}}\right) \mbox{ as }t\to+\infty.$
\end{itemize}
\end{theorem}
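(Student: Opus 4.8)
The plan is to run a Lyapunov analysis adapted to both the non-autonomous damping $\frac{\alpha}{t^q}$ and the Tikhonov term $\frac{a}{t^p}x$. First I would record the basic unweighted energy
$$E(t) = \frac12\|\dot{x}(t)\|^2 + \big(g(x(t)) - \min g\big) + \frac{a}{2t^p}\|x(t)\|^2,$$
and compute, after using \eqref{DynSys} to replace $\ddot{x}$, that $\dot{E}(t) = -\frac{\alpha}{t^q}\|\dot{x}(t)\|^2 - \frac{ap}{2t^{p+1}}\|x(t)\|^2 \le 0$. Thus $E$ is nonincreasing and bounded, which already yields $\|\dot{x}(t)\| = O(1)$, $g(x(t)) - \min g = O(1)$, and the crucial a priori growth bound $\|x(t)\| = O(t^{p/2})$. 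This last estimate is precisely what will let me treat the Tikhonov contribution as a controlled perturbation in the refined step, and it also produces the integrable quantities $\int_{t_0}^{+\infty} t^{-q}\|\dot{x}\|^2\,dt$ and $\int_{t_0}^{+\infty} t^{-p-1}\|x\|^2\,dt$ that feed into the later analysis.

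The heart of the argument is a time-weighted Lyapunov function. Fixing $x^* \in \argmin g$, I would work with
$$\mathcal{E}(t) = \theta(t)\big(g(x(t)) - \min g\big) + \frac12\big\|\lambda(t)(x(t) - x^*) + \mu(t)\dot{x}(t)\big\|^2 + \frac{\nu(t)}{2}\|x(t) - x^*\|^2,$$
with nonnegative coefficient functions $\theta,\lambda,\mu,\nu$ to be matched to the exponents. Differentiating, substituting $\ddot{x} = -\frac{\alpha}{t^q}\dot{x} - \nabla g(x) - \frac{a}{t^p}x$, and invoking convexity in the form $\langle\nabla g(x), x - x^*\rangle \ge g(x) - \min g \ge 0$, I would collect $\dot{\mathcal{E}}$ into a sum of terms carrying powers such as $t^{-q}$, $t^{-p}$ and $\theta'(t)$. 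The coefficients are then chosen so that the dissipative term $-\alpha\mu^2 t^{-q}\|\dot{x}\|^2$ dominates the sign-indefinite ones; this forces the scalings $\theta(t) \asymp t^{\min(2q,p)}$ and $\mu(t) \asymp t^{\min(q,p/2)}$. Proving $\dot{\mathcal{E}}(t) \le 0$ up to a remainder that is either absorbed or integrable in $t$ gives $\mathcal{E}(t) = O(1)$, and reading off its leading terms yields $g(x(t)) - \min g = O\big(t^{-\min(2q,p)}\big)$ together with $\|\dot{x}(t)\| = O\big(t^{-\min(q,p/2)}\big)$, which is exactly statement (i) when $2q < p$ and statement (ii) when $2q \ge p$.

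The dichotomy in the theorem is the competition between two slow scales: the damping supplies dissipation at rate $t^{-q}$, hence function-value decay at rate $t^{-2q}$, while the Tikhonov pull acts at rate $t^{-p}$; whichever is slower governs the final exponent, producing the threshold $p = 2q$ that separates the two cases. I expect the main obstacle to be the Tikhonov cross-terms coming from $-\frac{a}{t^p}\langle x,\, \lambda(x - x^*) + \mu\dot{x}\rangle$: these must be dominated using the a priori bound $\|x(t)\| = O(t^{p/2})$ together with Young's inequality, and the balance becomes delicate as $p$ approaches $2$. The genuinely borderline instance is $p = 2$, where the Tikhonov term, the damping term and the quadratic anchor term all enter at the common order $t^{-2}$; there the nonnegativity of the resulting quadratic form in the pair $(\|\dot{x}\|, \|x - x^*\|)$ reduces to the algebraic condition $a \ge q(1-q)$, which is precisely the hypothesis imposed for $p = 2$. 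Verifying this coefficient positivity, and confirming that a single admissible choice of $(\theta,\lambda,\mu,\nu)$ works simultaneously across both regimes of the case split, is the technical crux of the proof.
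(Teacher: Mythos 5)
Your architecture is essentially the paper's: a Lyapunov function of the form $\theta(t)\left(g(x(t))-\min g\right)+\frac12\|\lambda(x(t)-x^\ast)+\mu(t)\dot{x}(t)\|^2+\frac{\nu(t)}{2}\|x(t)-x^\ast\|^2$ (the paper takes $\lambda=b$ constant, $\mu(t)=t^q$, $\theta(t)=t^{2q}$, plus a term $\frac{a}{2t^{p-2q}}\|x(t)\|^2$), substitution of the ODE, a convexity inequality to control the gradient pairing, and the dichotomy at $p=2q$ with the condition $a\ge q(1-q)$ emerging from coefficient positivity at $p=2$. Your preliminary step (monotonicity of the unweighted energy, $\|x(t)\|=O(t^{p/2})$) is correct but turns out to be unnecessary, since applying the gradient inequality to the strongly convex function $g_t(x)=g(x)+\frac{a}{2t^p}\|x\|^2$ packages the Tikhonov cross term exactly, leaving only a forcing term proportional to $t^{-p}\|x^\ast\|^2$ times the weight.

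The genuine gap is in your closing step. After all sign-indefinite terms are absorbed, there remains an irreducible source term of the form $C\,\lambda\mu(t)\,t^{-p}\|x^\ast\|^2$; in the paper's normalization this is $\frac{ab}{2}t^{q-p}\|x^\ast\|^2$, and in yours it is of order $t^{-q}$ (case $p\le 2q$) or $t^{q-p}$ (case $p\ge 2q$). This is integrable only when $p>q+1$, whereas the theorem covers the full range $0<p\le 2$ with $0<q<1$; so for $p\le q+1$ your conclusion ``remainder integrable $\Rightarrow\ \mathcal{E}(t)=O(1)$'' does not go through, and there is no negative term left into which a constant-coefficient forcing depending only on $\|x^\ast\|$ can be absorbed. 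The paper's remedy is to keep a strict Gronwall term: it establishes
\begin{equation*}
\dot{\mathcal{E}}(t)+\frac{K}{t^{r}}\,\mathcal{E}(t)\ \le\ \frac{ab}{2}\,t^{q-p}\|x^\ast\|^2,\qquad r=\max(q,p-q),
\end{equation*}
multiplies by the integrating factor $e^{\frac{K}{1-r}t^{1-r}}$ (treating $r=1$, i.e.\ $p=q+1$, separately with the factor $t^{K}$), and integrates to get $\mathcal{E}(T)=O\left(T^{q-p+r}\right)$ --- which is bounded only when $p\ge 2q$ and grows like $T^{2q-p}$ otherwise, yet still yields the stated rates after dividing by the weight $t^{2q}$. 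Without this mechanism (or an equivalent one) your argument cannot close; with it, your choice of exponents does reproduce (i) and (ii).
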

\begin{proof}  First, let $x^\ast \in \argmin g$ and consider $b>0$ that will be adjusted later.
 We denote $g^\ast := g(x^\ast)=\min g$ and we introduce the  energy functional $\mathcal{E} : [t_0, \infty) \to \mathbb{R}$
\begin{align}\label{Lyapunov}
\mathcal{E}(t) =& t^{2q} \left( g(x(t)) - g^\ast \right) +\frac{a}{2t^{p-2q}}\|x(t)\|^2+ \frac{1}{2} \| b(x(t)-x^\ast) + t^q \dot{x}(t) \|^2+\frac{b(\a-b-qt^{q-1} )}{2}\|x(t)-x^*\|^2.
\end{align}

Then,
\begin{align}\label{engyderiv1}
\dot{\mathcal{E}}(t) &=2qt^{2q-1} \left( g(x(t)) - g^\ast \right) + t^{2q} \<\n g(x(t)),\dot{x}(t)\>\\
\nonumber&+\frac{a(2q-p)t^{2q-p-1}}{2}\|x(t)\|^2+at^{2q-p}\<\dot{x}(t),x(t)\>+\<(b+qt^{q-1})\dot{x}(t) +t^q\ddot{x}(t),b(x(t)-x^\ast)+t^q \dot{x}(t)\>\\
\nonumber&+b(\a-b-qt^{q-1} )\<\dot{x}(t),x(t)-x^*\>+\frac{bq(1-q)t^{q-2}}{2}\|x(t)-x^*\|^2.
\end{align}
From the dynamical system (\ref{DynSys}), we have that
\begin{align}\label{SecondOrderDeriv}
\ddot{x}(t) = - \frac{a}{t^p} x(t)- \frac{\alpha}{t^q} \dot{x}(t) - \nabla g(x(t)),
\end{align}
hence
\begin{align}\label{forengy1}
&\<(b+qt^{q-1})\dot{x}(t) +t^q\ddot{x}(t),b(x(t)-x^\ast)+t^q \dot{x}(t)\>=\\
\nonumber&\<(b+qt^{q-1}-\a )\dot{x}(t) -at^{q-p}x(t)-t^q\nabla g(x(t)), b(x(t)-x^\ast)+t^q \dot{x}(t)\>=\\
\nonumber& b(b+qt^{q-1}-\a )\<\dot{x}(t),x(t)-x^*\>+(b+qt^{q-1}-\a )t^q\|\dot{x}(t)\|^2\\
\nonumber&-abt^{q-p}\<x(t),x(t)-x^*\>-at^{2q-p}\<\dot{x}(t),x(t)\>\\
\nonumber& -b t^q\<\n g(x(t)), x(t)-x^\ast\>-t^{2q}\<\nabla g(x(t)), \dot{x}(t)\>.
\end{align}

Combining \eqref{engyderiv1} and \eqref{forengy1} we get

\begin{align}\label{engyderiv2}
\dot{\mathcal{E}}(t) &= 2qt^{2q-1}\left( g(x(t)) - g^\ast \right)+(b+qt^{q-1}-\a )t^q\|\dot{x}(t)\|^2+\frac{a(2q-p)t^{2q-p-1}}{2}\|x(t)\|^2\\
\nonumber&-b t^q\<\n g(x(t) )+a t^{-p}x(t), x(t)-x^\ast\>+\frac{bq(1-q)t^{q-2}}{2}\|x(t)-x^*\|^2.
\end{align}

Consider now the strongly convex function
$$g_t:\mathcal{H}\To\R,\,g_t(x)=g(x)+\frac{a}{2t^{p}}\|x\|^2.$$
From the gradient inequality we have
$$g_t(y)-g_t(x)\ge\<\n g_t(x),y-x\>+\frac{a}{2t^{p}}\|x-y\|^2,\mbox{ for all }x,y\in\mathcal{H}.$$
Take now $y=x^*$ and $x=x(t).$ We get
\begin{align*}
&g(x^*)+\frac{a}{2t^{p}}\|x^*\|^2-g(x(t))-\frac{a}{2t^{p}}\|x(t)\|^2\ge\\
&-\<\n g(x(t))+at^{-p}x(t),x(t)-x^*\>+\frac{a}{2t^{p}}\|x(t)-x^*\|^2.
\end{align*}
Consequently,
\begin{align}\label{insert1}
-bt^q\<\n g(x(t))+at^{-p}x(t),x(t)-x^*\>&\le bt^q(g(x^*)-g(x(t)))+\frac{abt^{q-p}}{2}\|x^*\|^2-\frac{abt^{q-p}}{2}\|x(t)\|^2\\
\nonumber&-\frac{abt^{q-p}}{2}\|x(t)-x^*\|^2.
\end{align}

By inserting \eqref{insert1} in \eqref{engyderiv2} we get
\begin{align}\label{engyderiv3}
\dot{\mathcal{E}}(t) &= (2qt^{2q-1}-bt^q) \left( g(x(t)) - g^\ast \right)+(b+qt^{q-1}-\a )t^q\|\dot{x}(t)\|^2+\frac{a(2q-p)t^{2q-p-1}-abt^{q-p}}{2}\|x(t)\|^2\\
\nonumber&+\frac{bq(1-q)t^{q-2}-abt^{q-p}}{2}\|x(t)-x^*\|^2+\frac{abt^{q-p}}{2}\|x^*\|^2.
\end{align}

Since
$$\frac{1}{2} \| b(x(t)-x^\ast) + t^q \dot{x}(t) \|^2\le b^2\|x(t)-x^*\|^2+t^{2q}\|\dot{x}(t)\|^2$$
one has
\begin{align}\label{forgronwal1}
\mathcal{E}(t) \le & t^{2q} \left( g(x(t)) - g^\ast \right) +\frac{a}{2t^{p-2q}}\|x(t)\|^2+ t^{2q} \|  \dot{x}(t) \|^2+\frac{b(\a+b-qt^{q-1})}{2}\|x(t)-x^*\|^2.
\end{align}
Let $r=\max(q,p-q)$ and consider $K>0$ that will be defined in what follows.
Now, by multiplying \eqref{forgronwal1} with $\frac{K}{t^r}$ and adding to \eqref{engyderiv3} we get
\begin{align}\label{engyderivforgronwall}
\dot{\mathcal{E}}(t)+ \frac{K}{t^r}\mathcal{E}(t)&\le (2qt^{2q-1}-bt^q+Kt^{2q-r}) \left( g(x(t)) - g^\ast \right)+(b+qt^{q-1}-\a +Kt^{q-r} )t^q\|\dot{x}(t)\|^2\\
\nonumber&+\frac{a(2q-p)t^{2q-p-1}-abt^{q-p}+aKt^{2q-p-r}}{2}\|x(t)\|^2\\
\nonumber&+\frac{bq(1-q)t^{q-2}-abt^{q-p}+Kb(\a+b-qt^{q-1})t^{-r}}{2}\|x(t)-x^*\|^2\\
\nonumber&+\frac{abt^{q-p}}{2}\|x^*\|^2.
\end{align}

Now, take  $0<b<\a$ . If $p<2$ consider $0<K<\min\left(b,\a-b,\frac{a}{\a+b}\right).$ If $p=2$ then by hypotheses we have $a>q(1-q)$, so take $0<K<\min\left(b,\a-b,\frac{a-q(1-q)}{\a+b}\right).$
Then, easily can be observed that there exists $t_1\ge t_0$ such that
\begin{align}\label{engyderivforgronwal2}
\dot{\mathcal{E}}(t)+ \frac{K}{t^r}\mathcal{E}(t)&\le\frac{abt^{q-p}}{2}\|x^*\|^2\mbox{ for all }t\ge t_1.
\end{align}
Now, since $r=\max(q,p-q)$ and $0<q< 1$ we can take $r=1$ provided $p= q+1.$ In this case, by multiplying \eqref{engyderivforgronwal2} with
$t^K$ we get
$$\frac{d}{dt}\left(t^K\mathcal{E}(t)\right)\le \frac{ab\|x^*\|^2}{2}t^{K-1}\mbox{ for all }t\ge t_1.$$
By integrating the latter relation on an interval $[t_1,T],\,T>t_1$ we conclude that there exists $C>0$ such that
$$T^K\mathcal{E}(T)\le \frac{ab\|x^*\|^2}{2K}T^{K}+C.$$
Consequently,
$$\mathcal{E}(T)\le \frac{ab\|x^*\|^2}{2K}+\frac{C}{T^K}\le C_1\mbox{ for some }C_1>0,$$
hence for all $t\ge t_1$ one has
$$g(x(t))-g^*\le \frac{C_1}{t^{2q}}\mbox{ and }\|\dot{x}(t)\|\le\frac{\sqrt{C_1}}{t^{q}}.$$

In other words
\begin{equation}\label{qlessrate}
g(x(t))-g^*=O\left(\frac{1}{t^{2q}}\right)\mbox{ and }\|\dot{x}(t)\|=O\left(\frac{1}{t^q}\right),\mbox{ as }t\to+\infty.
\end{equation}

In the case $r\neq 1$, by multiplying \eqref{engyderivforgronwal2} with
$e^{\frac{K}{1-r}t^{1-r}}$ we get
\begin{equation}\label{foru}\frac{d}{dt}\left(e^{\frac{K}{1-r}t^{1-r}}\mathcal{E}(t)\right)\le \frac{ab\|x^*\|^2}{2}t^{q-p}e^{\frac{K}{1-r}t^{1-r}}\mbox{ for all }t\ge t_1.
\end{equation}
Observe further, that for all $t\ge t_1$ one has
$$\frac1K \frac{d}{dt}\left( t^{q-p+r}e^{\frac{K}{1-r}t^{1-r}}\right)=\frac1K\left((q-p+r) t^{q-p+r-1}+Kt^{q-p}\right)e^{\frac{K}{1-r}t^{1-r}}\ge t^{q-p}e^{\frac{K}{1-r}t^{1-r}}.$$
Combining the latter relation with \eqref{foru} we get
\begin{equation}\label{foru1}
\frac{d}{dt}\left(e^{\frac{K}{1-r}t^{1-r}}\mathcal{E}(t)\right)\le \frac{ab\|x^*\|^2}{2K}\frac{d}{dt}\left( t^{q-p+r}e^{\frac{K}{1-r}t^{1-r}}\right)\mbox{ for all }t\ge t_1.
\end{equation}

By integrating \eqref{foru1} on an interval $[t_1,T],\,T>t_1$ we conclude that
$$e^{\frac{K}{1-r}T^{1-r}}\mathcal{E}(T)\le C_1 T^{q-p+r}e^{\frac{K}{1-r}T^{1-r}}+C_2,$$
where $C_1=\frac{ab\|x^*\|^2}{2K}$ and $C_2=e^{\frac{K}{1-r}t_1^{1-r}}\mathcal{E}(t_1)-\frac{ab\|x^*\|^2}{2K}t_1^{q-p+r}e^{\frac{K}{1-r}t_1^{1-r}}.$

Consequently,
$$\mathcal{E}(T)\le C_1 T^{q-p+r}+\frac{C_2}{e^{\frac{K}{1-r}T^{1-r}}}\le C_3T^{q-p+r}\mbox{ for some }C_3>0,$$
hence for all $t\ge t_1$ one has
\begin{equation}\label{rno1rate}
g(x(t))-g^*\le \frac{C_3}{t^{q+p-r}}\mbox{ and }\|\dot{x}(t)\|\le\frac{\sqrt{C_3}}{t^{\frac{q+p-r}{2}}}.
\end{equation}

Now,  since $r=\max(q,p-q)$ one has  $r=q$ if $2q\ge p$ and $r=p-q$ if $p\ge 2q.$

Therefore, in case  $q<1$ and $2q\ge p$, \eqref{rno1rate} leads to
$$g(x(t))-g^*=O\left( \frac{1}{t^{p}}\right)\mbox{ and }\|\dot{x}(t)\|=O\left(\frac{1}{t^{\frac{p}{2}}}\right)\mbox{ as }t\to+\infty.$$
Finally, in case $q<1$ and $2q< p,\,p\neq q+1$, \eqref{rno1rate} leads to
$$g(x(t))-g^*=O\left( \frac{1}{t^{2q}}\right)\mbox{ and }\|\dot{x}(t)\|=O\left(\frac{1}{t^q}\right)\mbox{ as }t\to+\infty.$$

\end{proof}
\begin{remark} Note that, though provides the same convergence rate, the case $p=q+1$ must be treated separately in the proof of Theorem \ref{Oestimates}. This is due to the fact that in this case $r=1$, hence the antiderivative of $\frac{K}{t^r}$ is $K\ln t$ and therefore \eqref{engyderivforgronwal2} needs a special attention. Actually, we will show that the case $p=q+1$ is critical, in the sense that separates the cases when the trajectories converge strongly and the trajectories converges weakly, respectively.
\end{remark}

\begin{remark}\label{r3} Observe  that our analysis also work for the case $q=1$ and  $\a>3.$ Indeed, in this case by taking $2<b<\a-1$ and  $0<K<\min\left(b-2,\a-1-b,\frac{a}{\a+b-1}\right)$ in the proof of Theorem \ref{Oestimates} we reobtain some results from \cite{ACR} and \cite{AL-nemkoz}.
More precisely if $p\le 2$ one has
$$g\left(x(t)\right) - \min g =O\left(\frac{1}{t^{p}}\right) \mbox{ as }t\to+\infty\mbox{ and } \| \dot{x}(t) \|=O\left(\frac{1}{t^{\frac{p}{2}}}\right) \mbox{ as }t\to+\infty.$$

Further, if $p\ge 2$ one has
$$g\left(x(t)\right) - \min g =O\left(\frac{1}{t^{2}}\right) \mbox{ as }t\to+\infty\mbox{ and } \| \dot{x}(t) \|=O\left(\frac{1}{t}\right) \mbox{ as }t\to+\infty.$$
\end{remark}

In the next result we obtain some integral estimates in case $p>q+1.$ 

\begin{theorem}\label{integralOestimates}
Assume that $0<q\le 1,$  $q+1<p\le 2$ and for $p=2$ one has $a\ge q(1-q)$.
Let $t_0 > 0$ and  for some starting points $u_0,v_0\in\mathcal{H}$ let $x : [t_0, \infty) \to \mathcal{H}$ be the unique global solution of  (\ref{DynSys}).
Then, the trajectory $x(t)$ is bounded and following results hold.

 (integral estimates) \quad
$\displaystyle{\int_{t_0}^{+\infty} t^q\|\dot{x}(t)\|^2dt<+\infty\mbox{ and } \int_{t_0}^{+\infty} t^{q} \left( g\left(x(t)\right) - \min g\right)dt<+\infty}.$
\medskip

 (pointwise estimates)
$\displaystyle{\| \dot{x}(t) \|=O\left(\frac{1}{t^q}\right) \mbox{ as }t\to+\infty\mbox{ and } g\left(x(t)\right) - \min g =O\left(\frac{1}{t^{2q}}\right) \mbox{ as }t\to+\infty.}$
\end{theorem}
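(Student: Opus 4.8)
The plan is to re-use the energy functional $\mathcal{E}$ defined in \eqref{Lyapunov}, together with the expression \eqref{engyderiv3} for $\dot{\mathcal{E}}$ already obtained in the proof of Theorem \ref{Oestimates}, now exploiting that the standing hypothesis $q+1<p\le 2$ forces $q<1$ and $p>2q$. I fix an arbitrary $x^\ast\in\argmin g$ and a parameter $0<b<\a$, and read off the sign of each summand on the right-hand side of \eqref{engyderiv3} for large $t$. Since $q<1$ we have $2qt^{2q-1}=o(t^q)$, so $2qt^{2q-1}-bt^q\le-\frac b2 t^q<0$ eventually, which dissipates the function values; since $qt^{q-1}\to 0$ and $b<\a$, we have $b+qt^{q-1}-\a\le-\frac{\a-b}{2}<0$ eventually, which dissipates the velocities. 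Because $p>2q$ the coefficient of $\|x(t)\|^2$ is a sum of two negative multiples of powers of $t$ and is $\le 0$; the coefficient of $\|x(t)-x^\ast\|^2$ is likewise $\le 0$ for large $t$, since for $p<2$ the negative term $-abt^{q-p}$ dominates $bq(1-q)t^{q-2}$ (as $q-p>q-2$), while for $p=2$ it equals $\tfrac12 bt^{q-2}\left(q(1-q)-a\right)\le 0$ precisely because of the assumption $a\ge q(1-q)$.

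Discarding the nonpositive terms, this produces a time $t_1\ge t_0$ and constants $c_1,c_2>0$ such that
$$\dot{\mathcal{E}}(t)\le -c_1 t^q\left(g(x(t))-g^\ast\right)-c_2 t^q\|\dot{x}(t)\|^2+\frac{ab\|x^\ast\|^2}{2}\,t^{q-p}\mbox{ for all }t\ge t_1.$$
The decisive structural point, which distinguishes this regime from that of Theorem \ref{Oestimates}, is that now $q-p<-1$, so the forcing term $t^{q-p}$ is integrable on $[t_1,+\infty)$ and no Gronwall device is needed. Dropping also the two dissipation terms shows $\dot{\mathcal{E}}(t)\le \frac{ab\|x^\ast\|^2}{2}t^{q-p}$, whence $\mathcal{E}$ is bounded above by some $M>0$ on $[t_1,+\infty)$. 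Noting that $\mathcal{E}(t)\ge 0$ for large $t$ — all four terms in \eqref{Lyapunov} are nonnegative once $\a-b-qt^{q-1}\ge 0$ — and integrating the displayed inequality over $[t_1,T]$ gives
$$c_1\int_{t_1}^{T} t^q\left(g(x(t))-g^\ast\right)dt+c_2\int_{t_1}^{T} t^q\|\dot{x}(t)\|^2dt\le \mathcal{E}(t_1)+\frac{ab\|x^\ast\|^2}{2}\int_{t_1}^{+\infty}t^{q-p}dt<+\infty,$$
and letting $T\to+\infty$ yields both integral estimates.

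The boundedness of the trajectory and the pointwise estimates then follow from the uniform bound $\mathcal{E}(t)\le M$. Indeed, $t^{2q}\left(g(x(t))-g^\ast\right)\le\mathcal{E}(t)\le M$ gives $g(x(t))-\min g=O\left(\frac{1}{t^{2q}}\right)$; the inequality $\frac{b(\a-b-qt^{q-1})}{2}\|x(t)-x^\ast\|^2\le\mathcal{E}(t)\le M$, whose coefficient tends to $\frac{b(\a-b)}{2}>0$ and is thus bounded below by a positive constant for large $t$, gives the boundedness of $x$; finally $\frac12\|b(x(t)-x^\ast)+t^q\dot{x}(t)\|^2\le\mathcal{E}(t)\le M$ combined with the boundedness of $x$ yields $t^q\|\dot{x}(t)\|\le\|b(x(t)-x^\ast)+t^q\dot{x}(t)\|+b\|x(t)-x^\ast\|=O(1)$, that is $\|\dot{x}(t)\|=O\left(\frac{1}{t^q}\right)$.

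Since $\dot{\mathcal{E}}$ is already computed, no genuinely hard step remains; the only points requiring care are the sign bookkeeping for the coefficients of $\|x(t)\|^2$ and $\|x(t)-x^\ast\|^2$ — in particular the borderline case $p=2$, where the hypothesis $a\ge q(1-q)$ is exactly what keeps the $\|x(t)-x^\ast\|^2$ coefficient nonpositive — and the observation that $q-p<-1$ makes the forcing term integrable, which is what upgrades the pointwise analysis of Theorem \ref{Oestimates} to genuine integral estimates.
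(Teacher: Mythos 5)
Your proposal is correct and follows essentially the same route as the paper: the same energy functional \eqref{Lyapunov}, the same derivative identity \eqref{engyderiv3}, the same sign bookkeeping with $0<b<\a$ (including the role of $a\ge q(1-q)$ at $p=2$), and the same observation that $q-p<-1$ makes the forcing term integrable, yielding the differential inequality \eqref{e1} and hence both the integral and pointwise estimates by integration and boundedness of $\mathcal{E}$. No gaps.
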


\begin{proof}
Consider the energy functional defined at \eqref{Lyapunov}. 
According to \eqref{engyderiv3} one has
\begin{align}\label{engyderiv31}
\dot{\mathcal{E}}(t) &= (2qt^{2q-1}-bt^q) \left( g(x(t)) - g^\ast \right)+(b+qt^{q-1}-\a )t^q\|\dot{x}(t)\|^2+\frac{a(2q-p)t^{2q-p-1}-abt^{q-p}}{2}\|x(t)\|^2\\
\nonumber&+\frac{bq(1-q)t^{q-2}-abt^{q-p}}{2}\|x(t)-x^*\|^2+\frac{abt^{q-p}}{2}\|x^*\|^2.
\end{align}

Now,  we take $b<\a$  and we conclude that there exist $t_1\ge t_0$ and $C_1,C_2>0$ such that for all $t\ge t_1$ the following hold.
 $$\mathcal{E}(t)\ge 0,$$
$$2qt^{2q-1}-bt^q\le -C_1 t^q,$$
$$(b+qt^{q-1}-\a )t^q\le -C_2 t^q$$
and
$$\frac{a(2q-p)t^{2q-p-1}-abt^{q-p}}{2}\|x(t)\|^2+\frac{bq(1-q)t^{q-2}-abt^{q-p}}{2}\|x(t)-x^*\|^2\le 0.$$
Consequently,
\begin{equation}\label{e1}
\dot{\mathcal{E}}(t) +C_1t^q \left( g(x(t)) - g^\ast \right)+C_2t^q\|\dot{x}(t)\|^2\le \frac{ab\|x^*\|^2}{2}\frac{1}{t^{p-q}},\mbox{ for all }t\ge t_1.
\end{equation}

By integrating \eqref{e1} on $[t_1,T],\, T>t_1$ and taking into account that by hypotheses we have $p>q+1$, we conclude that there exists $C_3>0$ such that
\begin{align}\label{e2}
\int_{t_1}^T\dot{\mathcal{E}}(t)dt +C_1\int_{t_1}^T t^q \left( g(x(t)) - g^\ast \right)dt+C_2\int_{t_1}^T t^q\|\dot{x}(t)\|^2dt&\le \frac{ab\|x^*\|^2}{2}\int_{t_1}^T \frac{1}{t^{p-q}}dt\\
\nonumber&\le\frac{ab\|x^*\|^2}{2}\int_{t_1}^{+\infty} \frac{1}{t^{p-q}}dt\le C_3.
\end{align}
Hence, $\mathcal{E}(t)$ is bounded which implies that $x(t)$ is bounded,
$$g(x(t)-g^*=O\left(\frac{1}{t^{2q}}\right)\mbox{ as }t\to+\infty$$
and
$$\|\dot{x}(t)\|=O\left(\frac{1}{t^{q}}\right)\mbox{ as }t\to+\infty.$$
Further, \eqref{e2} ensures that
\begin{equation}\label{L1est}
t^q \left( g(x(t)) - g^\ast \right),t^q\|\dot{x}(t)\|^2\in L^1[t_0,+\infty[.
\end{equation}
\end{proof}

\begin{remark} Observe that the conclusion of the previous theorem remains valid also in case $q=1,\,p>2$ and $\a>3$ and this fact can be seen if one takes $2<b<\a-1$ in its proof. However, these results have already been obtained in \cite{ACR}.
 
 Even more, the pointwise estimates remain valid also  in the case $q=1$ $p>2$ and $\a=3$. Indeed, the result follows if one  takes $b=2$ in the proof of Theorem \ref{integralOestimates}, however the integral estimates do not hold anymore.
\end{remark}

Now, we are able to show the existence of the limit $\lim_{t\to+\infty}\|x(t)-x^*\|.$ 

\begin{lemma}\label{xlimit} Assume that $q+1<p\le 2$  and for  $p=2$ one has $a\ge q(1-q).$ 
For some starting points $u_0,v_0\in\mathcal{H}$ let $x : [t_0, \infty) \to \mathcal{H}$ be the unique global solution of  (\ref{DynSys}). Let $x^*\in\argmin g.$ Then,  there  exists  the limit $\lim_{t\to+\infty}\|x(t)-x^*\|.$
\end{lemma}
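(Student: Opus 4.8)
The plan is to run the classical anchoring argument on the function
$$h(t)=\tfrac12\|x(t)-x^*\|^2\ge 0,$$
which is bounded below, and to reduce the existence of $\lim_{t\to+\infty}h(t)$ to a second order differential inequality of the form $\ddot h(t)+\frac{\a}{t^q}\dot h(t)\le k(t)$ whose right-hand side is not merely integrable but satisfies the \emph{weighted} property $t^qk(t)\in L^1[t_0,+\infty[$. First I would compute $\dot h(t)=\<x(t)-x^*,\dot x(t)\>$ and $\ddot h(t)=\|\dot x(t)\|^2+\<x(t)-x^*,\ddot x(t)\>$, so that
$$\ddot h(t)+\frac{\a}{t^q}\dot h(t)=\|\dot x(t)\|^2+\left\langle x(t)-x^*,\ \ddot x(t)+\frac{\a}{t^q}\dot x(t)\right\rangle.$$
Substituting \eqref{SecondOrderDeriv}, the inner product becomes $-\frac{a}{t^p}\<x(t)-x^*,x(t)\>-\<x(t)-x^*,\n g(x(t))\>$, and convexity of $g$ with $x^*\in\argmin g$ gives $\<x(t)-x^*,\n g(x(t))\>\ge g(x(t))-\min g\ge 0$. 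Hence
$$\ddot h(t)+\frac{\a}{t^q}\dot h(t)\le \|\dot x(t)\|^2-\frac{a}{t^p}\<x(t)-x^*,x(t)\>=:k(t).$$

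The second step is to verify $t^qk(t)\in L^1$. Note that $q+1<p\le 2$ forces $q<1$. By Theorem \ref{integralOestimates} the trajectory is bounded, so $|\<x(t)-x^*,x(t)\>|\le M$ for some $M>0$, and since $p-q>1$ the Tikhonov contribution satisfies $\int_{t_0}^{+\infty}t^q\cdot\frac{aM}{t^p}\,dt=aM\int_{t_0}^{+\infty}\frac{dt}{t^{p-q}}<+\infty$. The velocity contribution is governed precisely by \eqref{L1est}, namely $t^q\|\dot x(t)\|^2\in L^1[t_0,+\infty[$. Thus $t^q|k(t)|\in L^1[t_0,+\infty[$, and in particular $k\in L^1$ since $t^q\ge t_0^q$.

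The last step, and the technical heart of the argument, is to pass from the differential inequality to the limit. Multiplying by the integrating factor $\Gamma(t)=\exp\big(\int_{t_0}^t\frac{\a}{s^q}ds\big)$ turns the inequality into $\frac{d}{dt}\big(\Gamma(t)\dot h(t)\big)\le \Gamma(t)|k(t)|$; integrating and taking positive parts bounds $[\dot h(t)]_+$, and estimating $\int_{t_0}^{+\infty}[\dot h(t)]_+\,dt$ by Fubini reduces matters to controlling $\Gamma(s)\int_s^{+\infty}\frac{dt}{\Gamma(t)}$. Here $\Gamma(t)=C\exp\big(\frac{\a}{1-q}t^{1-q}\big)$, and a Laplace-type estimate yields $\Gamma(s)\int_s^{+\infty}\frac{dt}{\Gamma(t)}=O(s^q)$ as $s\to+\infty$; this is exactly why the stronger weight is required, the bound becoming $\int_{t_0}^{+\infty}[\dot h(t)]_+\,dt\lesssim\int_{t_0}^{+\infty}s^q|k(s)|\,ds<+\infty$, so $[\dot h]_+\in L^1$. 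Finally, writing $h(t)=\big(h(t)-\int_{t_0}^t[\dot h]_+\big)+\int_{t_0}^t[\dot h]_+$ exhibits $h$ as the sum of a nonincreasing bounded-below function and a convergent nondecreasing one, whence $\lim_{t\to+\infty}h(t)$, and therefore $\lim_{t\to+\infty}\|x(t)-x^*\|$, exists. I expect the main obstacle to be the estimate $\Gamma(s)\int_s^{+\infty}\Gamma(t)^{-1}\,dt=O(s^q)$: it is what forces the use of the weighted integral estimate instead of plain $L^1$ integrability of $k$, and its degeneration as $q\uparrow 1$ (the factor becoming $O(s)$) is consistent with the threshold $p>q+1$ appearing in the hypotheses.
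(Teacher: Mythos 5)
Your proposal is correct and follows essentially the same route as the paper: the same anchor $w(t)=\tfrac12\|x(t)-x^*\|^2$, the same differential inequality $\ddot w+\frac{\a}{t^q}\dot w\le k(t)$ with the weighted integrability $t^qk(t)\in L^1$ supplied by Theorem \ref{integralOestimates}, and the same integrating-factor/Fubini argument with the estimate $\Gamma(s)\int_s^{+\infty}\Gamma(t)^{-1}dt=O(s^q)$, which the paper packages as Lemma \ref{A1} and proves via upper incomplete gamma function asymptotics. The only cosmetic difference is that you control the Tikhonov term via boundedness of the trajectory, whereas the paper uses the identity $\langle -\frac{a}{t^p}x,x-x^*\rangle=\frac{a}{2t^p}(\|x^*\|^2-\|x\|^2-\|x-x^*\|^2)\le\frac{a}{2t^p}\|x^*\|^2$, which also keeps $k$ nonnegative as required by the lemma's statement.
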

\begin{proof} The proof is based on Lemma \ref{A1}. Indeed, for $x^*\in \argmin g$ consider the function $w(t)=\frac12\|x(t)-x^*\|^2.$ Then, by using \eqref{DynSys} one has
$$\ddot{w}(t)+\frac{\a}{t^q}\dot{w}(t)=\left\<\ddot{x}(t)+\frac{\a}{t^q}\dot{x}(t),x(t)-x^*\right\>+\|\dot{x}(t)\|^2=\left\<-\n g({x}(t))-\frac{a}{t^p}{x}(t),x(t)-x^*\right\>+\|\dot{x}(t)\|^2$$
Now, from the monotonicity of $\n g$ we have $\<-\n g({x}(t)),x(t)-x^*\>\le 0$ and
$$\left\<-\frac{a}{t^p}{x}(t),x(t)-x^*\right\>=\frac{a}{2t^p}(\|x^*\|^2-\|x(t)\|^2-\|x(t)-x^*\|^2).$$
Consequently,
\begin{equation}\label{foruse}
\ddot{w}(t)+\frac{\a}{t^q}\dot{w}(t)\le \frac{a}{2t^p}\|x^*\|^2+\|\dot{x}(t)\|^2.
\end{equation}
We show that Lemma \ref{A1}  can be applied with $p(t)=1,\,q(t)=\frac{\a}{t^q}$ and $k(t)=\frac{a}{2t^p}\|x^*\|^2+\|\dot{x}(t)\|^2.$
Note that since $p>q+1$ and according to Theorem \ref{integralOestimates} it holds that $t^q\|\dot{x}(t)\|^2\in L^1[t_0,+\infty[,$ we have
\begin{equation}\label{forexist1}
t^qk(t)\in L^1[t_0,+\infty[.
\end{equation}

Now, if $0<q<1$ then $\int_{t_0}^t \frac{q(s)}{p(s)}ds=\frac{\a}{1-q}(t^{1-q}-t_0^{1-q}),$ hence
$\exp\left(\int_{t_0}^t \frac{q(s)}{p(s)}ds\right)=C_1e^{C_2t^{1-q}},$ where $C_1=e^{-\frac{\a}{1-q}t_0^{1-q}}$ and $C_2=\frac{\a}{1-q}.$
Consequently,
\begin{equation}\label{firstcond}
\frac{1}{\exp\left(\int_{t_0}^t \frac{q(s)}{p(s)}ds\right)}\in L^1[t_0,+\infty[.
\end{equation}
Further
\begin{align}\label{secondcond1}
\left(\int_{t}^{+\infty}\frac{dT}{\exp\left(\int_{t_0}^T \frac{q(s)}{p(s)}ds\right)}\right)\frac{\exp\left(\int_{t_0}^t\frac{q(s)}{p(s)}ds\right)}{p(t)}k(t)=\int_{t}^{+\infty}e^{-C_2T^{1-q}}dT e^{C_2t^{1-q}}k(t).
\end{align}
Consider the integral
$I_t=\int_{t}^{+\infty}e^{-C_2T^{1-q}}dT.$ By substituting $C_2T^{1-q}=\t$ we obtain $T=C_2^{-\frac{1}{1-q}}\t^{\frac{1}{1-q}},$ hence
$dT=C_3\t^{\frac{q}{1-q}}d\t,$ where $C_3=\frac{C_2^{-\frac{1}{1-q}}}{1-q}.$ Consequently,
$$I_t=C_3\int_{C_2t^{1-q}}^{+\infty}\t^{\frac{q}{1-q}}e^{-\t}d\t=C_3\Gamma\left(\frac{1}{1-q},C_2t^{1-q}\right),$$
where $\Gamma(s,x)=\int_{x}^{+\infty}\t^{s-1}e^{-\t}d\t$ is the upper incomplete gamma function. It is well known (see \cite{Milst}) that
$$\frac{\Gamma(s,x)}{x^{s-1}e^{-x}}\to1\mbox{ as }x\to+\infty.$$
Consequently,
$$\frac{\Gamma\left(\frac{1}{1-q},C_2t^{1-q}\right)}{(C_2t^{1-q})^{\frac{q}{1-q}}e^{-C_2t^{1-q}}}\to 1,\mbox{ as }t\to+\infty,$$
which shows that there exists $C_4>0$ such that
$$\int_{t}^{+\infty}e^{-C_2T^{1-q}}dT e^{C_2t^{1-q}}\le C_4 t^q .$$
Now, combining the previous relation with  \eqref{forexist1} and \eqref{secondcond1} we obtain that
\begin{equation}\label{secondcond}
\left(\int_{t}^{+\infty}\frac{dT}{\exp\left(\int_{t_0}^T \frac{q(s)}{p(s)}ds\right)}\right)\frac{\exp\left(\int_{t_0}^t\frac{q(s)}{p(s)}ds\right)}{p(t)}k(t)\le C_4 t^q k(t)\in L^1[t_0,+\infty[.
\end{equation}
Hence, according to Lemma \ref{A1}, there exists the limit $\lim_{t\to+\infty}\|x(t)-x^*\|.$
\end{proof}

In the next results we obtain '$o$' estimates for the decay $g(x(t))-\min g$ and the velocity $\dot{x}(t),$ further we show that a trajectory generated by the dynamical system \eqref{DynSys} converges weakly to a minimizer of the objective function $g.$

\begin{theorem}\label{oestimates}
Assume that $q+1<p\le 2$  and for  $p=2$ one has $a\ge q(1-q).$ Let $t_0 > 0$ and  for some starting points $u_0,v_0\in\mathcal{H}$ let $x : [t_0, \infty) \to \mathcal{H}$ be the unique global solution of  (\ref{DynSys}).
Then, the trajectory  $x(t)$ converges weakly, as $t\to+\infty$, to an element of $\argmin g$. Further, one has
$$\displaystyle{\| \dot{x}(t) \|=o\left(\frac{1}{t^q}\right) \mbox{ as }t\to+\infty\mbox{ and } g\left(x(t)\right) - \min g =o\left(\frac{1}{t^{2q}}\right) \mbox{ as }t\to+\infty.}$$
\end{theorem}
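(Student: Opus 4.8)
The plan is to split the statement into two essentially independent parts: the weak convergence of the trajectory, which I would obtain from Opial's lemma, and the improvement of the $O$-estimates of Theorem \ref{integralOestimates} to the announced $o$-estimates, which I would obtain from an auxiliary energy that simultaneously has a limit and a finite weighted integral.

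For the weak convergence I would invoke the continuous version of Opial's lemma with $S=\argmin g$. Its first hypothesis, the existence of $\lim_{t\to+\infty}\|x(t)-x^*\|$ for every $x^*\in\argmin g$, is precisely Lemma \ref{xlimit}. For the second hypothesis I would take an arbitrary weak sequential cluster point $\bar x$ of the trajectory, say $x(t_n)\rightharpoonup\bar x$ with $t_n\to+\infty$ (such points exist since $x(t)$ is bounded by Theorem \ref{integralOestimates}). Since $g(x(t))-\min g=O(t^{-2q})\to 0$, one has $g(x(t_n))\to\min g$, and the weak lower semicontinuity of the convex continuous function $g$ gives $g(\bar x)\le\liminf_n g(x(t_n))=\min g$, so $\bar x\in\argmin g$. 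Opial's lemma then yields weak convergence of $x(t)$ to a minimizer.

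For the $o$-estimates the idea is to build a nonnegative quantity that converges and whose weighted integral is finite. I would introduce
\[
\Phi(t)=t^{2q}\left(g(x(t))-\min g\right)+\tfrac12 t^{2q}\|\dot x(t)\|^2 .
\]
Differentiating and eliminating $\ddot x(t)+\nabla g(x(t))$ by means of \eqref{DynSys}, the two contributions $t^{2q}\langle\nabla g(x(t)),\dot x(t)\rangle$ and $t^{2q}\langle\ddot x(t),\dot x(t)\rangle$ combine into $t^{2q}\langle\nabla g(x(t))+\ddot x(t),\dot x(t)\rangle=-\a t^{q}\|\dot x(t)\|^2-a\,t^{2q-p}\langle x(t),\dot x(t)\rangle$, which leads to
\[
\dot\Phi(t)=2qt^{2q-1}\left(g(x(t))-\min g\right)+qt^{2q-1}\|\dot x(t)\|^2-\a t^{q}\|\dot x(t)\|^2-a\,t^{2q-p}\langle x(t),\dot x(t)\rangle .
\]
The key step is to verify that $\dot\Phi\in L^1[t_1,+\infty[$. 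The first two terms are bounded above by $t^q(g(x(t))-\min g)$ and $t^q\|\dot x(t)\|^2$ times the bounded factor $t^{q-1}$, hence are integrable by the integral estimates of Theorem \ref{integralOestimates}, and the third term is integrable for the same reason. The only delicate term is the cross term: using the boundedness of $x(t)$ together with the pointwise bound $\|\dot x(t)\|=O(t^{-q})$ one gets $|t^{2q-p}\langle x(t),\dot x(t)\rangle|=O(t^{q-p})$, which is integrable precisely because $p>q+1$. Consequently $\dot\Phi$ is integrable and $\lim_{t\to+\infty}\Phi(t)$ exists.

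To finish, note that $\Phi\ge 0$ and $\int_{t_1}^{+\infty}\Phi(t)/t^q\,dt=\int_{t_1}^{+\infty}t^q\big((g(x(t))-\min g)+\tfrac12\|\dot x(t)\|^2\big)\,dt<+\infty$, again by Theorem \ref{integralOestimates}, while $\int_{t_1}^{+\infty}t^{-q}\,dt=+\infty$ since $q<1$ (forced by $q+1<p\le 2$). A strictly positive limit of $\Phi$ would make $\Phi/t^q$ non-integrable, so necessarily $\lim_{t\to+\infty}\Phi(t)=0$; this gives at once $t^{2q}(g(x(t))-\min g)\to 0$ and $t^{2q}\|\dot x(t)\|^2\to 0$, which are the two $o$-estimates. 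I expect the main obstacle to be exactly the integrability of the cross term $a\,t^{2q-p}\langle x(t),\dot x(t)\rangle$, where the standing assumption $p>q+1$ is used in an essential way.
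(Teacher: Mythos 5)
Your proposal is correct, and the overall skeleton matches the paper's: weak convergence via Opial's lemma combined with Lemma \ref{xlimit}, and the $o$-estimates obtained by showing that $\Phi(t)=t^{2q}(g(x(t))-\min g)+\tfrac12 t^{2q}\|\dot x(t)\|^2$ has a limit, which must be zero because $\int_{t_0}^{+\infty}\Phi(t)t^{-q}\,dt<+\infty$ while $t^{-q}\notin L^1$. Where you genuinely diverge from the paper is in how the existence of $\lim_{t\to+\infty}\Phi(t)$ is established. The paper takes a roundabout route: it rewrites $\dot{\mathcal{E}}$ using the full Lyapunov functional \eqref{Lyapunov}, applies the quasi-Fej\'er Lemma \ref{fejer-cont1} twice (once to the auxiliary quantity in \eqref{forwlimit3} and once to $\mathcal{E}$ itself), invokes Lemma \ref{xlimit} to extract the existence of $\lim_{t\to+\infty}t^q\langle x(t)-x^*,\dot x(t)\rangle$, and only then isolates $\Phi$ by cancellation. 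You instead differentiate $\Phi$ directly, use \eqref{DynSys} to cancel the $t^{2q}\langle\nabla g(x(t)),\dot x(t)\rangle$ term against $t^{2q}\langle\ddot x(t),\dot x(t)\rangle$, and check that every term of $\dot\Phi$ is absolutely integrable — the three "energy" terms by the integral estimates of Theorem \ref{integralOestimates} (with $t^{q-1}$ bounded since $q<1$), and the cross term $a t^{2q-p}\langle x(t),\dot x(t)\rangle=O(t^{q-p})$ by boundedness of the trajectory, the pointwise bound $\|\dot x(t)\|=O(t^{-q})$, and the standing hypothesis $p>q+1$. This gives $\dot\Phi\in L^1$, hence the limit exists outright, without any appeal to Lemma \ref{fejer-cont1} or to the existence of $\lim t^q\langle x(t)-x^*,\dot x(t)\rangle$. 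Your argument is shorter and self-contained; the paper's buys nothing extra here beyond reusing machinery already set up for Theorem \ref{integralOestimates}. Both are valid.
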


\begin{proof} 

 By using the same notations as in the proof of Theorem \ref{integralOestimates}, in what follows we show that there exists
$$\lim_{t\to+\infty}\left(t^{2q} \left( g(x(t)) - g^\ast \right) + \frac{t^{2q}}{2} \|\dot{x}(t) \|^2\right)\in\R.$$

Now, according to \eqref{Lyapunov} one has
\begin{align}\label{forwlimit1}
\dot{\mathcal{E}}(t) &=2qt^{2q-1} \left( g(x(t)) - g^\ast \right) + t^{2q} \<\n g(x(t)),\dot{x}(t)\>\\
\nonumber&+\frac{d}{dt}\left(\frac{a}{2t^{p-2q}}\|x(t)\|^2+ \frac{1}{2} \| b(x(t)-x^\ast) + t^q \dot{x}(t) \|^2+\frac{b(\a-b-qt^{q-1} )}{2}\|x(t)-x^*\|^2\right).
\end{align}
Further, by using \eqref{DynSys} we get
\begin{align}\label{forwlimit2}
t^{2q} \<\n g(x(t)),\dot{x}(t)\>&=-t^{2q} \<\ddot{x}(t),\dot{x}(t)\>-\a t^{q} \|\dot{x}(t)\|^2-at^{2q-p}\<x(t),\dot{x}(t)\>\\
\nonumber& =-\frac{d}{dt}\left(\frac{t^{2q}}{2}\|\dot{x}(t)\|^2\right)+qt^{2q-1}\|\dot{x}(t)\|^2-\a t^{q} \|\dot{x}(t)\|^2\\
\nonumber&-a\frac{d}{dt}\left(\frac{t^{2q-p}}{2}\|x(t)\|^2\right)+a\frac{2q-p}{2}t^{2q-p-1}\|x(t)\|^2.
\end{align}

Combining \eqref{forwlimit1}, \eqref{forwlimit2} and \eqref{e1} we get
\begin{align}\label{forwlimit3}
&\frac{d}{dt}\left(-\frac{t^{2q}}{2}\|\dot{x}(t)\|^2+ \frac{1}{2} \| b(x(t)-x^\ast) + t^q \dot{x}(t) \|^2+\frac{b(\a-b-qt^{q-1} )}{2}\|x(t)-x^*\|^2\right)\le\\
\nonumber&-2qt^{2q-1} \left( g(x(t)) - g^\ast \right)+(\a t^q-qt^{2q-1})\|\dot{x}(t)\|^2-a\frac{2q-p}{2}t^{2q-p-1}\|x(t)\|^2+\frac{ab\|x^*\|^2}{2}\frac{1}{t^{p-q}}.
\end{align}

Since $q>2q-1$, $2q-p-1<q-p<-1$ and $x(t)$ is bounded, further,  $t^q \left( g(x(t)) - g^\ast \right),t^q\|\dot{x}(t)\|^2\in L^1[t_0,+\infty[$, we get that the right hand side of \eqref{forwlimit3} is of class $L^1[t_0,+\infty[$. Hence, by Lemma \ref{fejer-cont1} there exists the limit
$$\lim_{t\to+\infty}\left(-\frac{t^{2q}}{2}\|\dot{x}(t)\|^2+ \frac{1}{2} \| b(x(t)-x^\ast) + t^q \dot{x}(t) \|^2+\frac{b(\a-b-qt^{q-1} )}{2}\|x(t)-x^*\|^2\right)\in\R.$$

Now according to Lemma \ref{xlimit} there exists $\lim_{t\to+\infty}\|x(t)-x^*\|\in\R,$ consequently there exists the limit
\begin{equation}\label{forwlimit4}
\lim_{t\to+\infty}t^q\< x(t)-x^\ast,\dot{x}(t)\>\in\R.
\end{equation}

According to \eqref{e1} one has $\dot{\mathcal{E}}(t) \le \frac{ab\|x^*\|^2}{2}\frac{1}{t^{p-q}},\mbox{ for all }t\ge t_1$ and, since $p>q+1$, the right hand side of the previous inequality is of class $L^1[t_0,+\infty[.$ Consequently Lemma \ref{fejer-cont1} assures the existence and finiteness of the limit
\begin{align*}&\lim_{t\to+\infty}\mathcal{E}(t)=\\
\nonumber&\lim_{t\to+\infty}\left(t^{2q} \left( g(x(t)) - g^\ast \right) +\frac{a}{2t^{p-2q}}\|x(t)\|^2+ \frac{1}{2} \| b(x(t)-x^\ast) + t^q \dot{x}(t) \|^2+\frac{b(\a-b-qt^{q-1} )}{2}\|x(t)-x^*\|^2\right).
\end{align*}
Obviously $p>q+1>2q$ and the fact that $x(t)$ is bounded assure that $\lim_{t\to+\infty}\frac{a}{2t^{p-2q}}\|x(t)\|^2=0.$ Further the existence of $\lim_{t\to+\infty}\|x(t)-x^*\|$ and \eqref{forwlimit4} implies that there exists the limit
\begin{equation}\label{forwlimit5}
\lim_{t\to+\infty}\left(t^{2q} \left( g(x(t)) - g^\ast \right) + \frac{t^{2q}}{2} \| \dot{x}(t) \|^2\right)\in\R.
\end{equation}
According to \eqref{L1est} we have
\begin{equation}\label{forwlimit6}
\int_{t_0}^{+\infty}\frac{1}{t^q}\left(t^{2q} \left( g(x(t)) - g^\ast \right) + \frac{t^{2q}}{2} \| \dot{x}(t) \|^2\right)dt=
\int_{t_0}^{+\infty}t^{q} \left( g(x(t)) - g^\ast \right)dt + \int_{t_0}^{+\infty}\frac{t^{q}}{2} \| \dot{x}(t) \|^2dt<+\infty.
\end{equation}
Now, since $\frac{1}{t^q}\not\in L^1[t_0,+\infty[$ from \eqref{forwlimit5} and \eqref{forwlimit6} we obtain that
$$\lim_{t\to+\infty}\left(t^{2q} \left( g(x(t)) - g^\ast \right) + \frac{t^{2q}}{2} \| \dot{x}(t) \|^2\right)=0,$$
hence,
$$g(x(t)) - g^\ast=o\left(\frac{1}{t^{2q}}\right)\mbox{ and }\| \dot{x}(t) \|=o\left(\frac{1}{t^{q}}\right).$$
\vskip0.3cm
Next we show that $x(t)$  converges weakly to a minimizer of $g.$ According to Lemma \ref{xlimit}, for every $x^*\in\argmin g$ the  limit $\lim_{t\to +\infty}\| x(t) -x^*\|$ exists. Now, if $\ol x \in  \mathcal{H}$ is  a weak sequential limit point of $x(t)$ then there exists a
sequence $(t_n)_{n\in \N} \subseteq  [t_0,+\infty[$ such that $\lim_{n\to+ \infty}  t_n = +\infty$  and $x(t_n)$ converges weakly
to $\ol x$ as $n\to+\infty.$
Obviously the function $g$ is weakly lower semicontinuous, since it is convex
and continuous, consequently $ g(\ol x) \leq  \liminf_{n\to+\infty}  g(x(t_n)=\lim_{n\to+\infty}  g(x(t_n)=g^*= \min g$,  which shows that $\ol x \in  \argmin g.$
According to the Opial lemma it follows that
$$w-\lim_{t\to+\infty}x(t) \in  \argmin g.$$
\end{proof}

\section{Strong convergence}

We continue the present section by emphasizing the main idea behind the Tikhonov regularization, which will generate strong convergence results for our dynamical system \eqref{DynSys} to a minimizer of the objective function of minimal norm. By  $x_{t}$ we denote the unique solution of the strongly convex minimization problem
\begin{align*}
 \min_{x \in \mathcal{H}} \left( g(x) + \frac{a}{2t^p} \| x \|^2 \right).
\end{align*}
We know, (see for instance \cite{att-com1996}), that the Tikhonov approximation curve $t \to x_{t}$ satisfies $x^\ast = \lim\limits_{t \to +\infty} x_{t}$, where $x^\ast = \argmin\limits_{x \in \argmin g} \| x \|$ is the minimal norm element from the set $\argmin g.$ Obviously, $\{x^*\}=\proj_{\argmin g} 0$ and we have the inequality $\| x_{t} \| \leq \| x^\ast \|$ (see \cite{BCL}).

Since $x_{t}$ is the unique minimum of the strongly convex function $g_t(x)=g(x)+\frac{a}{2t^p}\|x\|^2,$ obviously one has
\begin{equation}\label{fontos0}
\n g_t(x_{t})=\n g(x_{t})+\frac{a}{t^p}x_{t}=0.
\end{equation}
Further, according to Lemma 2 from \cite{ABCR} the function $t\To x_{t}$ is derivable almost everywhere and one has
\begin{equation}\label{fontos1}
\left\|\frac{d}{dt}x_{t}\right\|\le\frac{p}{t}\|x_{t}\|\mbox{ for almost every} t\ge t_0.
\end{equation}

Note that since $g_t$ is strongly convex, from the gradient inequality we have
\begin{equation}\label{fontos2}
g_t(y)-g_t(x)\ge\<\n g_t(x),y-x\>+\frac{a}{2t^p}\|x-y\|^2,\mbox{ for all }x,y\in\mathcal{H}.
\end{equation}
In particular
\begin{equation}\label{fontos3}
g_t(x)-g_t(x_t)\ge\frac{a}{2t^p}\|x-x_t\|^2,\mbox{ for all }x\in\mathcal{H}.
\end{equation}
Let $y:[t_0,+\infty[\to\mathcal{H}, s\To y(s)$ derivable at a point $t\in]t_0,+\infty[.$ Then it is obvious that
\begin{equation}\label{fontos4}
\frac{d}{dt}g_t(y(t))=\<\n g_t(y(t)),\dot{y}(t)\>-\frac{ap}{2t^{p+1}}\|y(t)\|^2.
\end{equation}

Finally, for all $x,y\in\mathcal{H}$, one has

\begin{equation}\label{fontos5}
g(x)-g(y)=(g_t(x)-g_t(x_t))+(g_t(x_t)-g_t(y))+\frac{a}{2t^p}(\|y\|^2-\|x\|^2)\le g_t(x)-g_t(x_t)+\frac{a}{2t^p}\|y\|^2.
\end{equation}
Now, in order to show the strong convergence of the dynamical system \eqref{DynSys} to an element of minimum norm of the nonempty, convex and closed set $\argmin g$, we state our main result of the present section.

\begin{theorem}\label{StrongConvergenceResultLyapunov}

Assume that $0<q<1,\,0<p< q+1$   and let $x$ be the unique global solution of \eqref{DynSys}. Let $x^*=\proj_{\argmin g} 0.$
Then,  $ \lim\limits_{t \to \infty} \| x(t) - x^\ast \| = 0$ and the following estimates hold.
\begin{itemize}
\item[(i)] If $\frac{3q+1}{2}\le p<q+1$ then $\|\dot{x}(t)\|=O\left(\frac{1}{t^{2q-p+1}}\right)\mbox{ as }t\to+\infty.$\\
Further,  if $\frac{3q+1}{2}\le p\le \frac{4q+2}{3}$, then
$g(x(t))-\min g=O\left(\frac{1}{t^{p}}\right)\mbox{ as }t\to+\infty$ and for $\frac{4q+2}{3}< p< q+1$ one has
$g(x(t))-\min g=O\left(\frac{1}{t^{4q-2p+2}}\right)\mbox{ as }t\to+\infty.$

\item[(ii)] If $0<p< \frac{3q+1}{2}$, then 
$\|\dot{x}(t)\|=O\left(\frac{1}{t^{\frac{p+1-\max(q,p-q)}{2}}}\right)\mbox{ as }t\to+\infty.$\\
Further, $g(x(t))-\min g=O\left(\frac{1}{t^p}\right)\mbox{ as }t\to+\infty.$
\end{itemize}
\end{theorem}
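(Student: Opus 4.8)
The plan is to repeat the Lyapunov analysis of Theorem \ref{Oestimates} and Theorem \ref{integralOestimates}, but to anchor it at the moving Tikhonov target $x_t$ instead of a fixed minimizer. Observe first that, since $\n g_t(x)=\n g(x)+\frac{a}{t^p}x$, the system \eqref{DynSys} can be written as $\ddot{x}(t)+\frac{\a}{t^q}\dot{x}(t)+\n g_t(x(t))=0$, where $g_t$ is $\frac{a}{t^p}$-strongly convex with unique minimizer $x_t$. The strategy is to prove $\lim_{t\to+\infty}\|x(t)-x_t\|=0$ and then to conclude $\lim_{t\to+\infty}\|x(t)-x^\ast\|=0$ from the triangle inequality together with the classical fact $x_t\to x^\ast$; the pointwise rates will be read off the same energy functional.

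Concretely, I would study the functional obtained from \eqref{Lyapunov} by replacing $x^\ast$ with $x_t$ and $g(x(t))-g^\ast$ with $g_t(x(t))-g_t(x_t)$, i.e.
$$\mathcal{E}(t)=t^{2q}\big(g_t(x(t))-g_t(x_t)\big)+\frac12\big\|b(x(t)-x_t)+t^q\dot{x}(t)\big\|^2+\frac{\xi(t)}{2}\|x(t)-x_t\|^2,$$
with $b\in(0,\a)$ and $\xi(t)$ tending to a positive constant. Differentiating, I would eliminate $\ddot{x}$ through the rewritten system, compute $\frac{d}{dt}g_t(x(t))$ by the envelope formula \eqref{fontos4}, and use the optimality \eqref{fontos0}, which gives $\frac{d}{dt}g_t(x_t)=-\frac{ap}{2t^{p+1}}\|x_t\|^2$. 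The gradient-inequality step \eqref{insert1} of Theorem \ref{Oestimates} is now replaced by strong convexity \eqref{fontos3}, which produces a coercive term of order $t^{2q-p}\|x(t)-x_t\|^2$; this coercivity, absent in the purely weak-convergence analysis, is the engine of the strong convergence. Every contribution containing the velocity $\dot{x}_t$ of the approximation curve (from differentiating $\|x(t)-x_t\|^2$ and the cross term) would be dominated by means of \eqref{fontos1} and $\|x_t\|\le\|x^\ast\|$, i.e. $\|\dot{x}_t\|\le\frac{p}{t}\|x^\ast\|$, so that it enters only as a lower-order, explicitly decaying remainder.

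With $b,\xi$ chosen so that the non-coercive terms carry the correct sign for $t$ large, I would arrive at a differential inequality $\dot{\mathcal{E}}(t)+\frac{K}{t^r}\mathcal{E}(t)\le R(t)$ with $r=\max(q,p-q)$ and an explicit remainder $R(t)$. Since $p<q+1$ forces $r<1$, integrating this inequality via the factor $e^{\frac{K}{1-r}t^{1-r}}$, exactly as in Theorem \ref{Oestimates}, yields a bound on $\mathcal{E}(t)$ that tends to $0$; the coercive term together with the quadratic term $\frac{\xi(t)}{2}\|x(t)-x_t\|^2$ then gives $\|x(t)-x_t\|\to0$, hence the strong convergence for the whole range $0<p<q+1$. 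The rates follow by isolating the three constituents of $\mathcal{E}$: the momentum term controls $t^{2q}\|\dot{x}(t)\|^2$ and the leading term controls $t^{2q}(g_t(x(t))-g_t(x_t))$, while the comparison \eqref{fontos5} with $y=x^\ast$ gives $g(x(t))-\min g\le\big(g_t(x(t))-g_t(x_t)\big)+\frac{a}{2t^p}\|x^\ast\|^2$. The threshold $p=\frac{3q+1}{2}$ is the value at which the dominant remainder in $R(t)$ switches, thereby changing the decay exponent of $\mathcal{E}$ and of $\|\dot{x}(t)\|$, while $p=\frac{4q+2}{3}$ is the value at which the regularization remainder $\frac{a}{2t^p}\|x^\ast\|^2=O(t^{-p})$ overtakes the term coming from $\mathcal{E}(t)/t^{2q}$; together these explain the case split in (i) and (ii).

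The main obstacle is precisely the moving target $x_t$. In Theorems \ref{Oestimates}--\ref{oestimates} the analysis is anchored at a fixed point, so differentiating the quadratic terms produces no drift; here each such differentiation generates $\dot{x}_t$-contributions and an indefinite cross term $\<x(t)-x_t,\dot{x}_t\>$ which must be absorbed by the coercive strong-convexity term $t^{2q-p}\|x(t)-x_t\|^2$. Arranging for all of these error terms to be simultaneously subordinate requires a tight coupling between the damping exponent $q$ and the regularization exponent $p$; it is exactly this balance that forces the restriction $p<q+1$ (guaranteeing $r<1$ and hence the decay of $\mathcal{E}$) and that generates the threshold exponents delimiting the subcases.
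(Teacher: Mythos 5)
Your proposal follows essentially the same route as the paper's proof: the paper's energy functional \eqref{Lyapunovstr} is exactly your $\mathcal{E}$ with $\xi(t)=b(\a-b-qt^{q-1})$, and the argument proceeds as you describe — envelope formula \eqref{fontos4}, strong convexity \eqref{fontos2}--\eqref{fontos3}, absorption of the drift terms via \eqref{fontos1} and $\|x_t\|\le\|x^*\|$, the Gronwall-type inequality $\dot{E}(t)+\frac{K}{t^r}E(t)\le R(t)$ with $r=\max(q,p-q)<1$ integrated against $e^{\frac{K}{1-r}t^{1-r}}$, and the rates extracted via \eqref{fontos5}, with the thresholds $\frac{3q+1}{2}$ and $\frac{4q+2}{3}$ arising for precisely the reasons you give. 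The only slight imprecision is that in the subcase $p<2q$ the resulting bound on $E(t)$ need not itself tend to zero, but as you note the coercive term $t^{2q-p}\|x(t)-x_t\|^2$ hidden in $t^{2q}(g_t(x(t))-g_t(x_t))$ still forces $\|x(t)-x_t\|=O(t^{\frac{q-1}{2}})\to0$, which is exactly how the paper concludes.
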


\begin{proof}

 Consider $\a>b>0$ and define, for every $t \geq t_0$, the following energy functional

\begin{align}\label{Lyapunovstr}
E(t)&=t^{2q}(g_t(x(t))-g_t(x_t))+\frac{1}{2} \| b(x(t)-x_t) + t^q \dot{x}(t) \|^2+\frac{b(\a-b-qt^{q-1} )}{2}\|x(t)-x_t\|^2.
\end{align}
Obviously, there exists $t_1\ge t_0$ such that $\frac{b(\a-b-qt^{q-1} )}{2}>0$ for all $t\ge t_1$, hence $E(t)\ge0$ for all $t\ge t_1.$
Now, by using \eqref{fontos4} and the fact that $\n g_t(x_t)=0$ we get
\begin{align}\label{strderiv}
\dot{E}(t)&=2qt^{2q-1}(g_t(x(t))-g_t(x_t))+ t^{2q}\left(\<\n g_t(x(t)),\dot{x}(t)\>-\frac{ap}{2t^{p+1}}\|x(t)\|^2+\frac{ap}{2t^{p+1}}\|x_t\|^2\right)\\
\nonumber&+\left\<b\left(\dot{x}(t)-\frac{d}{dt}x_t\right)+qt^{q-1}\dot{x}(t)+t^q\ddot{x}(t),b(x(t)-x_t)+t^q\dot{x}(t)\right\>\\
\nonumber& +\frac{b(q(1-q)t^{q-2} )}{2}\|x(t)-x_t\|^2+b(\a-b-qt^{q-1} )\left\<\dot{x}(t)-\frac{d}{dt}x_t,x(t)-x_t\right\>.
\end{align}

Proceeding as in \eqref{forengy1} we obtain

\begin{align}\label{forstrderiv1}
&\<(b+qt^{q-1})\dot{x}(t) +t^q\ddot{x}(t),b(x(t)-x_t)+t^q \dot{x}(t)\>= b(b+qt^{q-1}-\a )\<\dot{x}(t),x(t)-x_t\>\\
\nonumber&+(b+qt^{q-1}-\a )t^q\|\dot{x}(t)\|^2 -b t^q\<\n g_t(x(t)), x(t)-x_t\>-t^{2q}\<\nabla g_t(x(t)), \dot{x}(t)\>.
\end{align}

Now, according to \eqref{fontos2} one has
\begin{equation}\label{forstrderiv2}
-b t^q(g_t(x(t))-g_t(x_t))-\frac{ab}{2t^{p-q}}\|x(t)-x_t\|^2\ge b t^q\<\n g_t(x(t)),x_t-x(t)\>
\end{equation}

Combining \eqref{strderiv}, \eqref{forstrderiv1} and \eqref{forstrderiv2} we get

\begin{align}\label{strderiv1}
\dot{E}(t)&\le (2qt^{2q-1}-b t^q)(g_t(x(t))-g_t(x_t))- \frac{ap}{2t^{p-2q+1}}\left(\|x(t)\|^2-\|x_t\|^2\right)\\
\nonumber& +(b+qt^{q-1}-\a )t^q\|\dot{x}(t)\|^2+\frac{b(q(1-q)t^{q-2}-at^{q-p} )}{2}\|x(t)-x_t\|^2
\\
\nonumber&+b(\a-qt^{q-1})\left\<\frac{d}{dt}x_t,x(t)-x_t\right\>+bt^q\left\<\frac{d}{dt}x_t,\dot{x}(t)\right\>
\end{align}

Observe further, that
$$\frac{1}{2} \| b(x(t)-x_t) + t^q \dot{x}(t) \|^2\le b^2\|x(t)-x_t\|^2+t^{2q}\|\dot{x}(t)\|^2,$$
hence, we have
\begin{align}\label{Lyapunovstrineq}
E(t)&\le t^{2q}(g_t(x(t))-g_t(x_t))+t^{2q}\|\dot{x}(t)\|^2+\frac{b(\a+b-qt^{q-1} )}{2}\|x(t)-x_t\|^2.
\end{align}
Consider now $r=\max(q,p-q)$ and $K>0$ that will be defined later.
Then, \eqref{strderiv1} and \eqref{Lyapunovstrineq} lead to

\begin{align}\label{strderivforlimit}
\dot{E}(t)+\frac{K}{t^r}E(t)&\le (2qt^{2q-1}-b t^q+Kt^{2q-r})(g_t(x(t))-g_t(x_t))- \frac{ap}{2t^{p-2q+1}}\left(\|x(t)\|^2-\|x_t\|^2\right)\\
\nonumber& +(b+qt^{q-1}-\a +Kt^{q-r})t^q\|\dot{x}(t)\|^2\\
\nonumber&+\frac{b(q(1-q)t^{q-2}-at^{q-p}+(\a+b-qt^{q-1})Kt^{-r} )}{2}\|x(t)-x_t\|^2
\\
\nonumber&+b(\a-qt^{q-1})\left\<\frac{d}{dt}x_t,x(t)-x_t\right\>+bt^q\left\<\frac{d}{dt}x_t,\dot{x}(t)\right\>,\mbox{ for all }t\ge t_1.
\end{align}

Now, it is obvious that for every $\b>0$ one has
$$\left\<\frac{d}{dt}x_t,x(t)-x_t\right\>\le \frac{\b}{2}t^{p-q}\left\|\frac{d}{dt}x_t\right\|^2+\frac{1}{2\b}t^{q-p}\|x(t)-x_t\|^2.$$
By using \eqref{fontos1} and the fact that there exists $t_2\ge t_1$ such that $b(\a-qt^{q-1})> 0$ for all $t\ge t_2$, we obtain that
\begin{equation}\label{xtderivest1}
b(\a-qt^{q-1})\left\<\frac{d}{dt}x_t,x(t)-x_t\right\>\le \frac{\b p^2 b(\a-qt^{q-1})}{2}t^{p-q-2}\left\|x_t\right\|^2+\frac{b(\a-qt^{q-1})}{2\b}t^{q-p}\|x(t)-x_t\|^2,
\end{equation}
for all $t\ge t_2$ and every $\b>0.$

Further, by using \eqref{fontos1} again, we obtain that for every $\g>0$ it holds
\begin{equation}\label{xtderivest2}
bt^q\left\<\frac{d}{dt}x_t,\dot{x}\right\>\le bt^q\left(\frac{ p^2 }{4\g t^2}\left\|x_t\right\|^2+\g\|\dot{x}(t)\|^2\right),\mbox{ for all }t\ge t_2.
\end{equation}

We recall that $\|x_t\|\le \|x^*\|$, hence by injecting \eqref{xtderivest1} and \eqref{xtderivest2} in \eqref{strderivforlimit} and neglecting the non-positive term $- \frac{ap}{2t^{p-2q+1}}\|x(t)\|^2$ we obtain
\begin{align}\label{strderivforlimit1}
\dot{E}(t)+\frac{K}{t^r}E(t)&\le (2qt^{2q-1}-b t^q+Kt^{2q-r})(g_t(x(t))-g_t(x_t))\\
\nonumber& +(b+qt^{q-1}-\a +Kt^{q-r}+b\g)t^q\|\dot{x}(t)\|^2\\
\nonumber&+\frac{b\left(q(1-q)t^{q-2}-at^{q-p}+(\a+b-qt^{q-1})Kt^{-r}+\frac{\a-qt^{q-1}}{\b}t^{q-p}\right)}{2}\|x(t)-x_t\|^2
\\
\nonumber&+ \left(\frac{ap}{2}t^{2q-p-1}+\frac{\b p^2 b(\a-qt^{q-1})}{2}t^{p-q-2}+\frac{b p^2}{4\g}t^{q-2}\right)\|x^*\|^2,\mbox{ for all }t\ge t_2\mbox{ and }\b,\g>0.
\end{align}

Since $0<b<\a$ one can consider $0<\g<\frac{\a}{b}-1$ and $\b>\frac{\a}{a}.$ Further, choose $K$ such that
$$0<K<\min\left(b,\a-b(1+\g),\frac{a\b-\a}{\b(\a+b)}\right).$$

Then, easily can be checked that there exists $t_3\ge t_2$ such that
\begin{equation}\label{forgronwal}
\dot{E}(t)+\frac{K}{t^r}E(t)\le  \left(\frac{ap}{2}t^{2q-p-1}+\frac{\b p^2 b(\a-qt^{q-1})}{2}t^{p-q-2}+\frac{b p^2}{4\g}t^{q-2}\right)\|x^*\|^2,\mbox{ for all }t\ge t_3.
\end{equation}

From now on  we treat the two cases (i) and (ii) separately.

{\bf (i)} If $p\ge\frac{3q+1}{2}$, then since by the hypotheses one has $0<q< 1$ we conclude that  there exists $C>0$ and $t_4\ge t_3$ such that
$$\left(\frac{ap}{2}t^{2q-p-1}+\frac{\b p^2 b(\a-qt^{q-1})}{2}t^{p-q-2}+\frac{b p^2}{4\g}t^{q-2}\right)\|x^*\|^2\le Ct^{p-q-2},\mbox{ for all }t\ge t_4.$$
Consequently, \eqref{forgronwal}becomes
\begin{equation}\label{forgronwal11}
\dot{E}(t)+\frac{K}{t^r}E(t)\le Ct^{p-q-2},\mbox{ for all }t\ge t_4.
\end{equation}

Since $0<q<1$ and $q+1>p\ge\frac{3q+1}{2}$ we get that $r=\max(q,p-q)=p-q<1$. By multiplying \eqref{forgronwal11} with
$e^{\frac{K}{1-r}t^{1-r}}$ we get

\begin{equation}\label{forgronwal2}
\frac{d}{dt}\left(e^{\frac{K}{1-r}t^{1-r}}E(t)\right)\le Ct^{p-q-2}e^{\frac{K}{1-r}t^{1-r}},\mbox{ for all }t\ge t_4.
\end{equation}
Observe that
$$\frac{d}{dt}\left(t^{p-q-2+r}e^{\frac{K}{1-r}t^{1-r}}\right)=((p-q-2+r)t^{p-q-3+r}+Kt^{p-q-2})e^{\frac{K}{1-r}t^{1-r}}$$
and since $p<q+1$ and $r<1$ we conclude that 
$$\frac{d}{dt}\left(t^{p-q-2+r}e^{\frac{K}{1-r}t^{1-r}}\right)\ge K t^{p-q-2}e^{\frac{K}{1-r}t^{1-r}},\mbox{ for all }t\ge t_4.$$
Hence, \eqref{forgronwal2} becomes
\begin{equation}\label{forgronwal3}
\frac{d}{dt}\left(e^{\frac{K}{1-r}t^{1-r}}E(t)\right)\le \frac{C}{K}\frac{d}{dt}\left(t^{p-q-2+r}e^{\frac{K}{1-r}t^{1-r}}\right),\mbox{ for all }t\ge t_4.
\end{equation}

Now, by integrating \eqref{forgronwal3} on an interval $[t_4,T],\,T>t_4$ we obtain
$$e^{\frac{K}{1-r}T^{1-r}}E(T)\le C_1T^{p-q-2+r}e^{\frac{K}{1-r}T^{1-r}}+C_2,$$
where $C_1=\frac{C}{K}$ and $C_2=e^{\frac{K}{1-r}t_4^{1-r}}E(t_4)-C_1t_4^{p-q-2+r}e^{\frac{K}{1-r}t_4^{1-r}}.$
In other words
$$E(t)\le C_1 t^{p-q-2+r}+\frac{C_2}{e^{\frac{K}{1-r}t^{1-r}}},\mbox{ for all }t\ge t_4.$$
Obviously $\frac{C_2}{e^{\frac{K}{1-r}t^{1-r}}}\le t^{p-q-2+r}$ if $t$ is big enough, hence there exists $t_5\ge t_4$ and $C_3>0$ such that
\begin{equation}\label{afinal}
E(t)\le C_3 t^{p-q-2+r},\mbox{ for all }t\ge t_5.
\end{equation}
Taking into account that $r=p-q$ and the definition of $E(t)$, from \eqref{afinal} we obtain at once that
\begin{equation}\label{eq2}
\|x(t)-x_t\|=O\left(\frac{1}{t^{q-p+1}}\right)\mbox{  as }t\to+\infty.
\end{equation}
Now, since $\lim_{t\to+\infty}x_t=x^*$ from \eqref{eq2} we get
$$\lim_{t\to+\infty}\|x(t)-x^*\|=0.$$
Further, \eqref{afinal} leads to
$$\|\dot{x}(t)\|=O\left(\frac{1}{t^{2q-p+1}}\right)\mbox{ as }t\to+\infty$$
and
$$g_t(x(t))-g_t(x_t)=O\left(\frac{1}{t^{4q-2p+2}}\right)\mbox{ as }t\to+\infty.$$
Now, if $4q-2p+2\ge p$, that is, $\frac{3q+1}{2}\le p\le \frac{4q+2}{3}$, from \eqref{fontos5} we obtain that
$$g(x(t))-\min g=O\left(\frac{1}{t^{p}}\right)\mbox{ as }t\to+\infty.$$

Conversely, if $4q-2p+2< p$, that is, $\frac{4q+2}{3}< p< q+1$, from \eqref{fontos5} we obtain
 $$g(x(t))-\min g=O\left(\frac{1}{t^{4q-2p+2}}\right)\mbox{ as }t\to+\infty.$$

{\bf (ii)}  Let us return now to \eqref{forgronwal}. If $p<\frac{3q+1}{2}$  we conclude that $q+1> p$, hence $t^{2q-p-1}\ge t^{q-2}$ and  $t^{2q-p-1}> t^{p-q-2}.$ From the latter relation we deduce that there exists $C>0$ and $t_4\ge t_3$ such that
$$\left(\frac{ap}{2}t^{2q-p-1}+\frac{\b p^2 b(\a-qt^{q-1})}{2}t^{p-q-2}+\frac{b p^2}{4\g}t^{q-2}\right)\|x^*\|^2\le Ct^{2q-p-1},\mbox{ for all }t\ge t_4.$$
Consequently, \eqref{forgronwal} becomes
\begin{equation}\label{forgronwal111}
\dot{E}(t)+\frac{K}{t^r}E(t)\le Ct^{2q-p-1},\mbox{ for all }t\ge t_4.
\end{equation}

Observe that $r<1$, hence proceeding analogously as in the previous case we obtain that there exists $C_1>0$ and $t_5\ge t_4$ such that
\begin{equation}\label{afinal1}
E(t)\le C_4 t^{2q-p-1+r},\mbox{ for all }t\ge t_5.
\end{equation}

Hence, we obtain at once that
$$\|\dot{x}(t)\|^2=O\left(\frac{1}{t^{p+1-r}}\right)\mbox{ as }t\to+\infty$$
and
$$g_t(x(t))-g_t(x_t)=O\left(\frac{1}{t^{p+1-r}}\right)\mbox{ as }t\to+\infty.$$
Now, from \eqref{fontos5} and the fact that $r<1$ we obtain
 $$g(x(t))-\min g=O\left(\frac{1}{t^p}\right)\mbox{ as }t\to+\infty.$$

Now, if $p\ge 2q$ we have $r=p-q$ hence $2q-p-1+r=q-1<0$ then, according to \eqref{Lyapunovstr} and \eqref{afinal} one has
$$\frac{b(\a-b-qt^{q-1} )}{2}\|x(t)-x_t\|^2\le C_4 t^{q-1},\mbox{ for all }t\ge t_6.$$
Consequently,
\begin{equation}\label{eq3}
\|x(t)-x_t\|=O\left(\frac{1}{t^{\frac{1-q}{2}}}\right)\mbox{  as }t\to+\infty.
\end{equation}

Conversely, if $p<2q$ then $r=q$, and since $E(t)\ge t^{2q}(g_t(x(t))-g_t(x_t))$ from \eqref{afinal1} and \eqref{fontos3} we get
$$\frac{a t^{2q-p}}{2}\|x(t)-x_t\|^2\le C_4 t^{2q-p-1+r},\mbox{ for all }t\ge t_5.$$
Hence,
\begin{equation}\label{eq4}
\|x(t)-x_t\|=O\left(\frac{1}{t^{\frac{1-q}{2}}}\right)\mbox{  as }t\to+\infty.
\end{equation}
Now, since $\lim_{t\to+\infty}x_t=x^*$ from \eqref{eq3} and \eqref{eq4} we get
$$\lim_{t\to+\infty}\|x(t)-x^*\|=0.$$
\end{proof}

\begin{remark} Observe that our analysis presented in the proof of Theorem \ref{StrongConvergenceResultLyapunov} also works for the cases $p=q+1$ and $q=1,\,p\le 2$, unfortunately one can not obtain an improvement of the results already presented at Theorem \ref{Oestimates} and Remark \ref{r3}.
\end{remark}

\section{Numerical experiments}
In this section we consider two numerical experiments for the trajectories generated by the dynamical system \eqref{DynSys} for a convex but not strongly convex objective function
$$g:\R^2\to\R,\,\,\,g(x,y)=(mx+ny)^2\mbox{ where }m,n\in\R\setminus\{0\}.$$
Observe that $\argmin g=\left\{\left(x,-\frac{m}{n}x\right):x\in\R\right\}$ and $\min g=0$. Obviously, the minimizer of minimal norm of $g$ is $x^*=(0,0).$
In the following numerical experiments we consider the starting points $x(1)=(1,1),\,\dot{x}(1)=(-1,-1)$ and the continuous time dynamical system \eqref{DynSys} is solved numerically with the ode45 adaptive method in MATLAB on the interval $[1, 100]$.

In our first experiment we take  $m=5$ and $n=1$, values for which the function $g$ is well conditioned. Further, we consider $\a=3.5$, $a=1$ and $p=1.2$ and we study the evolution of the two errors $\|x(t)-x^*\|$ and $g(x(t))-\min g$, for a trajectory $x(t)$ generated by the dynamical system \eqref{DynSys}, with different values of $q$.  The results are depicted on Figure 1, where the $y$ axis is endowed with a logarithmic scale.
\begin{figure}[hbt!]
\begin{subfigure}{.49\textwidth}
  \centering
  \includegraphics[width=.99\linewidth]{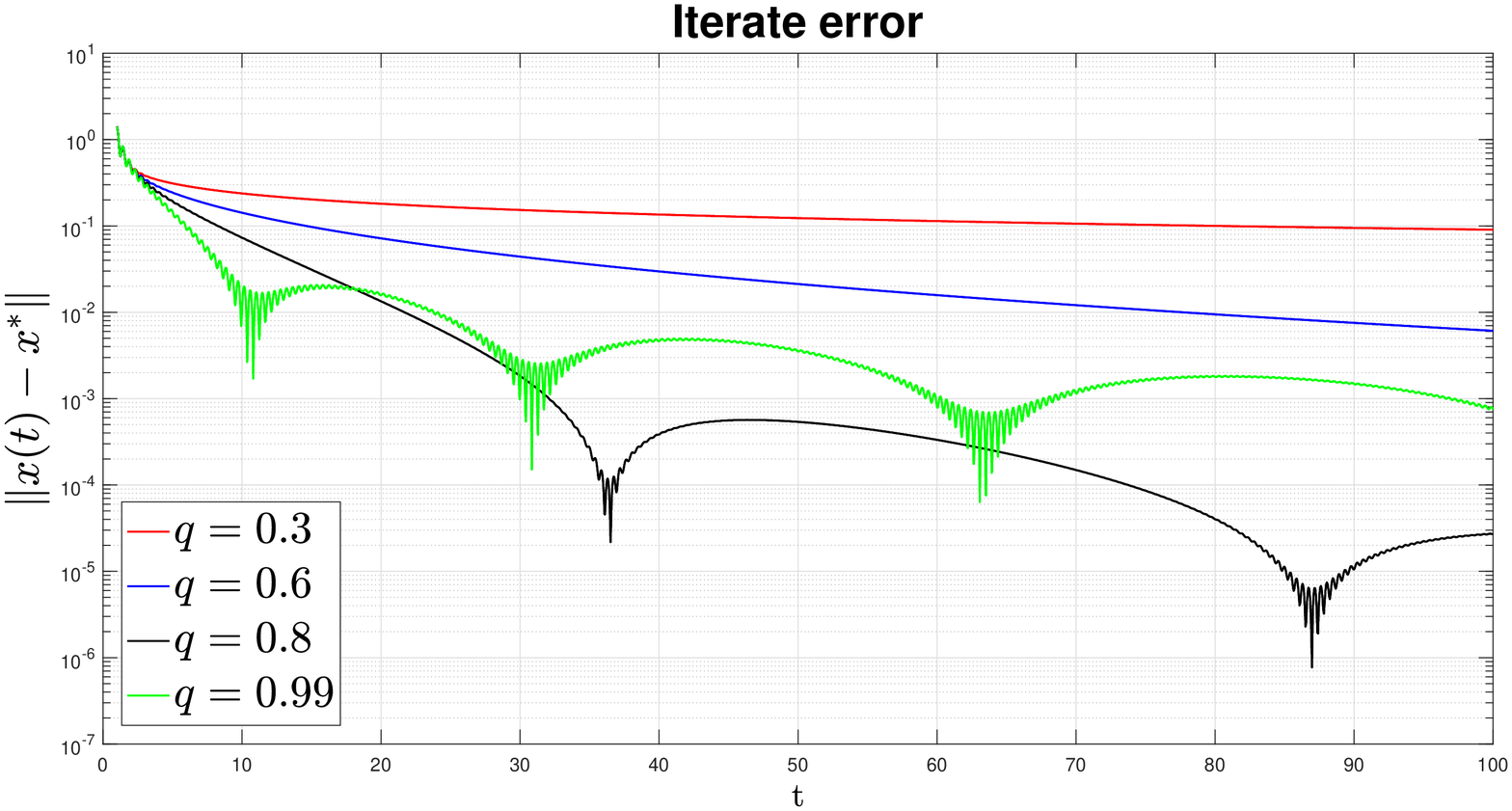}
\end{subfigure}
\begin{subfigure}{.49\textwidth}
  \centering
  \includegraphics[width=.99\linewidth]{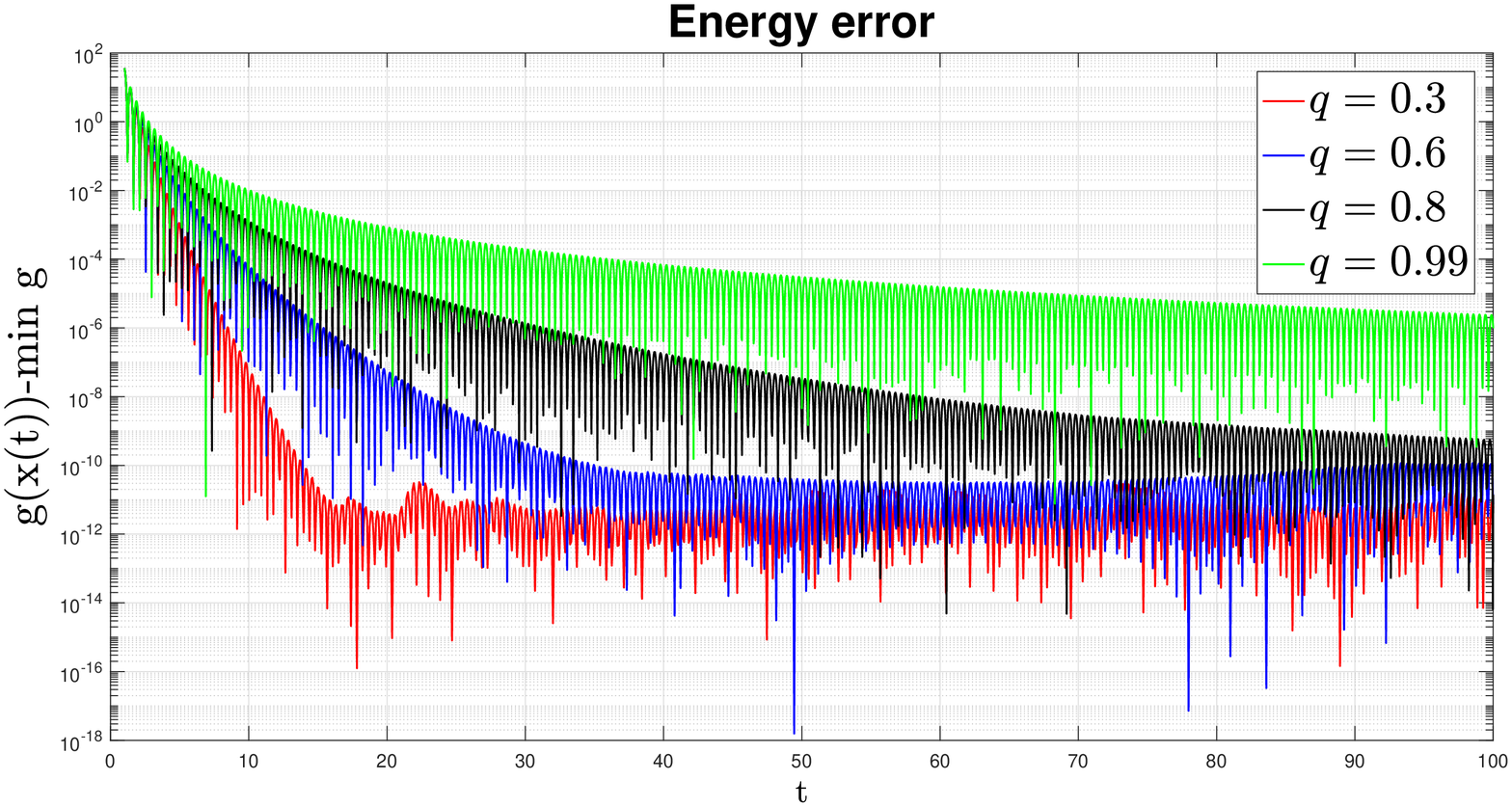}
\end{subfigure}
 \caption{Error analysis with different damping parameters in the dynamical system \eqref{DynSys}  for a well conditioned convex objective function.}
\end{figure}\label{fig11}

Note that the according to our numerical experiment for $p=1.2$ the best  convergence  result for the iterate error $\|x(t)-x^*\|$ is achieved for $q=0.99$, meanwhile the best best  convergence  result for the energy error $g(x(t))-\min g$ is attained for $q=0.3.$

In our second experiment we fix $q=0.7$ and we take different values of $p.$ The other parameters remain unchanged. The results are depicted on Figure 2, where the $y$ axis is endowed with a logarithmic scale.

\begin{figure}[hbt!]
\begin{subfigure}{.49\textwidth}
  \centering
  \includegraphics[width=.99\linewidth]{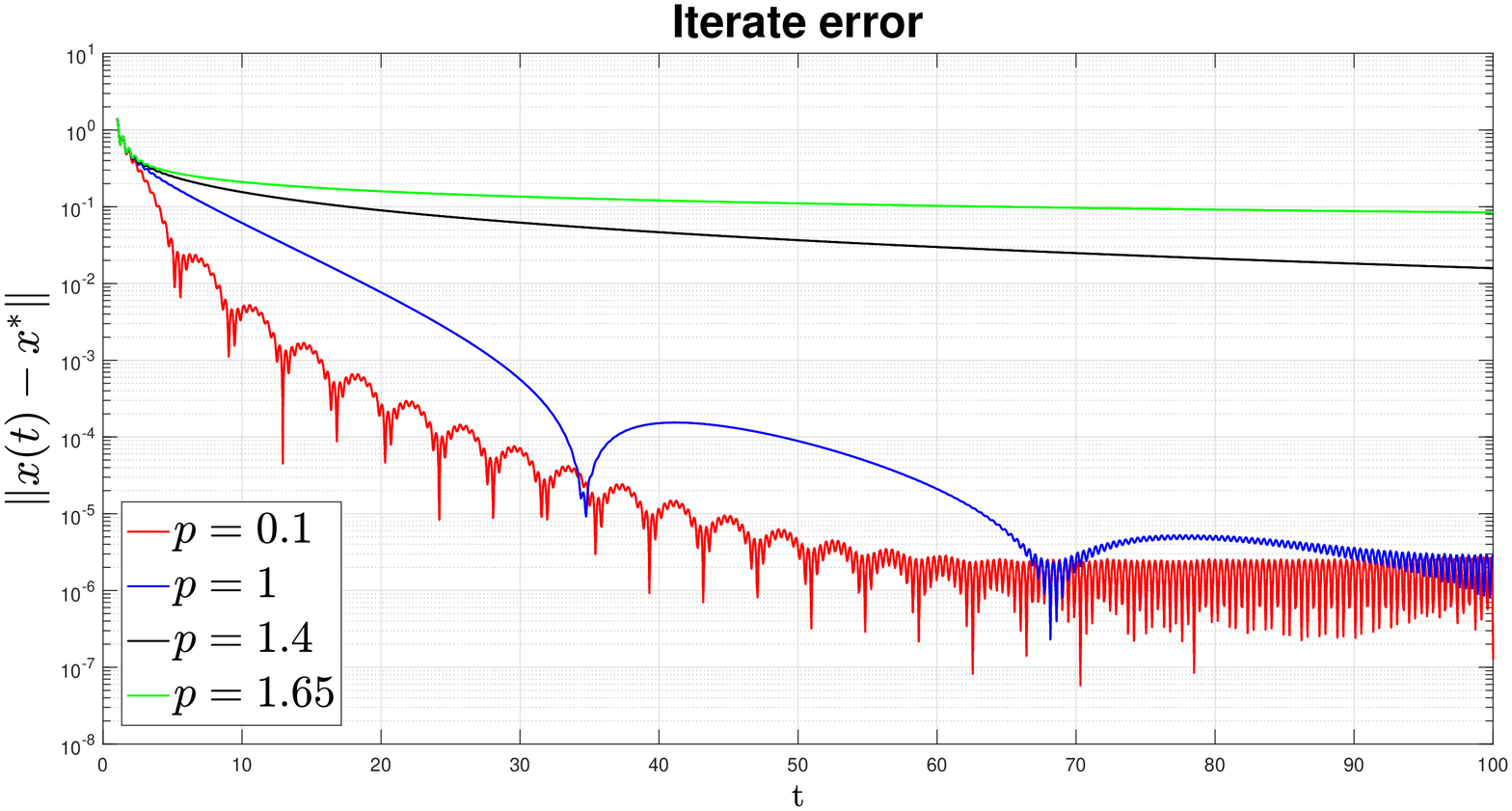}
\end{subfigure}
\begin{subfigure}{.49\textwidth}
  \centering
  \includegraphics[width=.99\linewidth]{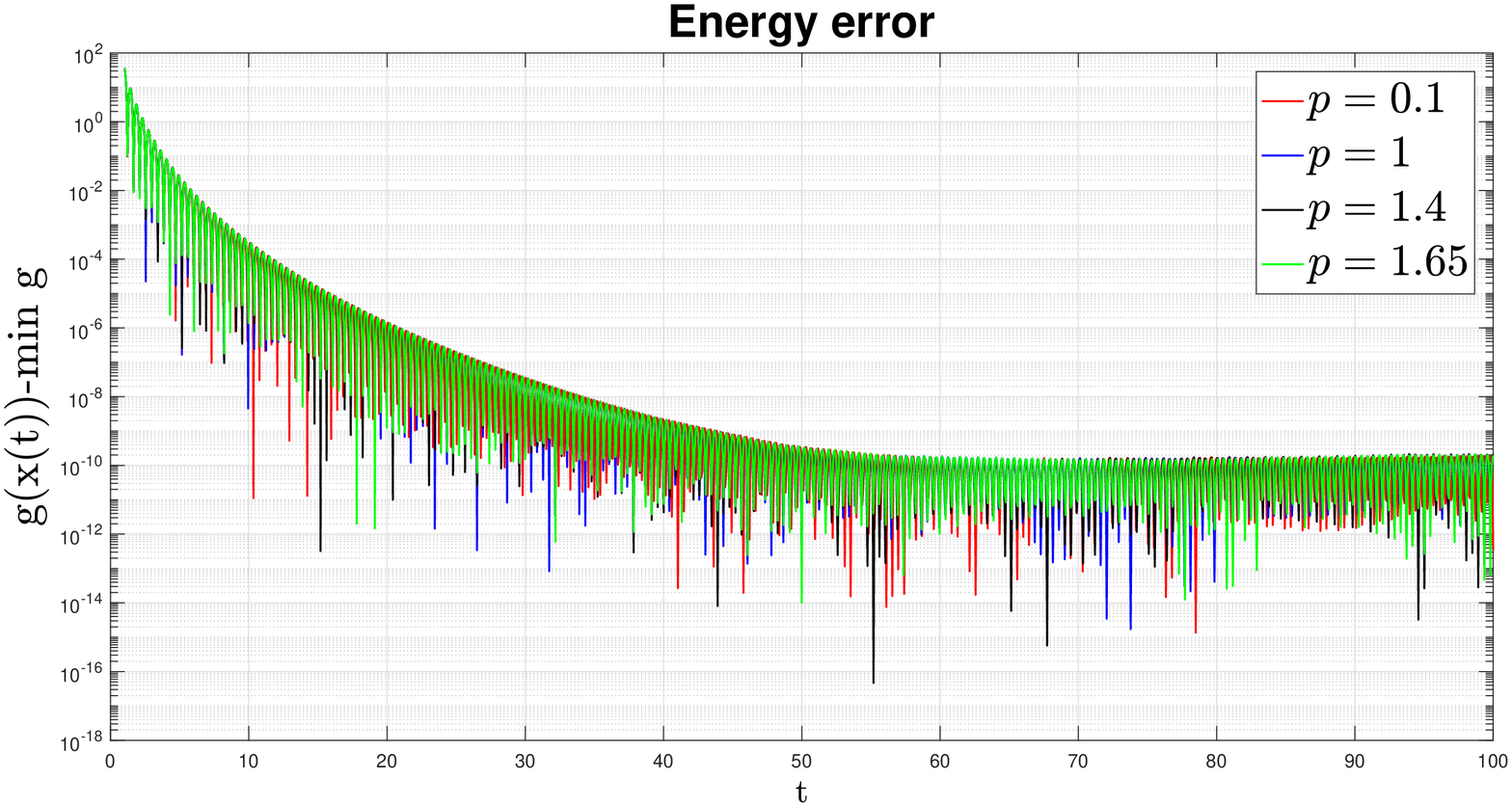}
\end{subfigure}
 \caption{Error analysis with different Tikhonov regularization  parameters in the dynamical system \eqref{DynSys}  for a well conditioned convex objective function.}
\end{figure}\label{fig21}
We conclude that also for fixed damping, one obtains the best convergence behaviour for the iterate error $\|x(t)-x^*\|$ for the cases when $p$ is considerably less than $2q.$
Further, one can easily observe that the energy error $g(x(t))-\min g$ is not very sensitive for the changes of the Tikhonov regularization parameter.

\appendix
\section{Appendix}\label{app}

\subsection{Existence and uniqueness for the Cauchy problem}
Let us first show that the solution for \eqref{DynSys} is well posed.

\begin{theorem}\label{Cauchy-weel-posed}
Given $(x_0, v_0) \in \mathcal{H} \times \mathcal{H}$, there exists a unique global classical solution $x : [t_0, +\infty[ \to \mathcal{H}$ of the dynamical system \eqref{DynSys}.
\end{theorem}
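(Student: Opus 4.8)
The plan is to recast \eqref{DynSys} as a first-order system in the product Hilbert space $\mathcal{H}\times\mathcal{H}$ and invoke the Cauchy-Lipschitz-Picard theorem for local well-posedness, then rule out finite-time blow-up by a Lyapunov (energy) argument. Concretely, I would set $X(t)=(x(t),\dot{x}(t))$ and write the system as $\dot{X}(t)=F(t,X(t))$, where
$$F(t,(u,v))=\left(v,\,-\frac{\a}{t^q}v-\n g(u)-\frac{a}{t^p}u\right).$$
Since $t_0>0$, the coefficients $t\mapsto \a/t^q$ and $t\mapsto a/t^p$ are continuous (indeed smooth) on $[t_0,+\infty)$, so $F$ is continuous in $t$; and because $\n g$ is Lipschitz on bounded sets by hypothesis, $F(t,\cdot)$ is Lipschitz on bounded sets, uniformly for $t$ in compact subintervals. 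The Cauchy-Lipschitz-Picard theorem then yields a unique maximal classical solution on some interval $[t_0,T_{\max})$.

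The main point, and the only real obstacle, is to show $T_{\max}=+\infty$. I would argue by contradiction via the blow-up alternative: if $T_{\max}<+\infty$, then $\|X(t)\|\to+\infty$ as $t\uparrow T_{\max}$. To preclude this I would introduce the energy
$$\mathcal{W}(t)=\frac12\|\dot{x}(t)\|^2+\big(g(x(t))-\min g\big)+\frac{a}{2t^p}\|x(t)\|^2,$$
which is nonnegative since $g(x(t))-\min g\ge 0$. Differentiating and substituting $\ddot{x}(t)$ from \eqref{DynSys}, the cross terms $\<\n g(x(t)),\dot{x}(t)\>$ and $\frac{a}{t^p}\<x(t),\dot{x}(t)\>$ cancel, leaving
$$\dot{\mathcal{W}}(t)=-\frac{\a}{t^q}\|\dot{x}(t)\|^2-\frac{ap}{2t^{p+1}}\|x(t)\|^2\le 0,$$
so $\mathcal{W}$ is nonincreasing and $\mathcal{W}(t)\le\mathcal{W}(t_0)$ on $[t_0,T_{\max})$.

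Finally I would extract the required bounds on the finite interval. Since $t\le T_{\max}$ one has $a/(2t^p)\ge a/(2T_{\max}^p)>0$, whence $\|x(t)\|^2\le (2T_{\max}^p/a)\,\mathcal{W}(t_0)$, and directly $\|\dot{x}(t)\|^2\le 2\mathcal{W}(t_0)$. Thus $\|X(t)\|$ stays bounded on $[t_0,T_{\max})$, contradicting the blow-up alternative; hence $T_{\max}=+\infty$ and the solution is global. The energy computation is entirely routine here, precisely because the damping and Tikhonov terms contribute sign-definite quantities to $\dot{\mathcal{W}}$; the essential structural fact being exploited is that $t_0>0$ keeps all coefficients regular, so no singularity obstructs either the local existence step or the globalization argument.
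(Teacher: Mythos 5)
Your proposal is correct and follows essentially the same route as the paper: the same first-order reformulation in $\mathcal{H}\times\mathcal{H}$, the same application of the Cauchy--Lipschitz theorem in the locally Lipschitz setting, and the same decreasing energy $\frac12\|\dot{x}(t)\|^2+g(x(t))+\frac{a}{2t^p}\|x(t)\|^2$ to rule out finite-time non-extendability. The only cosmetic difference is that you bound $\|x(t)\|$ directly from the Tikhonov term using $t\le T_{\max}$ and invoke the blow-up alternative as a black box, whereas the paper bounds only $\|\dot{x}(t)\|$ from the energy and then spells out the extension argument (Lipschitz continuity of $x$, existence of limits of $x$ and $\dot{x}$ at $T_{\max}$, restart); both are valid.
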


\begin{proof} The proof  relies on the combination of the Cauchy-Lipschitz theorem with energy estimates.
First consider the Hamitonian formulation of  \eqref{DynSys} as the first order system
\begin{align}\label{DynSys-2}
\begin{cases}
\dot{x}(t) - y(t) =0 \vspace{1mm} \\
\dot{y}(t) + \frac{\a}{t^q} y(t) + \nabla g\left(x(t) \right)+\frac{a}{t^p}x(t)=0 \vspace{1mm}\\
x(t_0) = u_0, \,
y(t_0) = v_0.
\end{cases}
\end{align}
Taking into account the hypotheses and by applying the Cauchy-Lipschitz theorem in the locally Lipschitz case, we obtain the existence and uniqueness of a local solution.
Then, in order to pass from a local solution to a global solution, we rely on the  energy estimate obtained by taking the scalar product of \eqref{DynSys} with $\dot{x}(t)$. It gives
$$
\frac{d}{dt} \left( \frac12 \| \dot{x}(t)\|^2 + g(x(t)) +  \frac{a}{2 t^p} \|x(t)\|^2  \right)
+ \frac{\a}{t^q} \| \dot{x}(t)\|^2 + \frac{ap}{2t^{p+1}} \|x(t)\|^2 =0.
$$
Obviously, the energy function $t \mapsto W(t)$ is decreasing where
$$
W(t):=  \frac12 \| \dot{x}(t)\|^2 + g(x(t)) +  \frac{a}{2 t^p} \|x(t)\|^2.
$$
The end of the proof follows a standard argument. Take a maximal solution defined on an interval $[t_0, T_{\max}[$. If $T_{\max}$ is infinite, the
proof is over. Otherwise, if $T_{\max}$ is finite, according to the above energy estimate, we have that $\| \dot{x}(t)\|$ remains bounded.
Let $\|\dot{x}_{\infty}\|=\sup_{t\in [t_0, T_{\max})}\|\dot{x}(t)\|.$ Since $\|x(t)-x(t')\|\le\|\dot{x}_{\infty}\||t-t'|$, we get that $\lim_{t\to T_{\max}}x(t):= x_{\infty}\in \mathcal{H}$. By \eqref{DynSys} the map $\ddot{x}$ is also bounded on the interval $[t_0, T_{\max})$ and under the same argument as before
$\lim_{t\to T_{\max}}\dot{x}(t):= x_{\infty}$ exists. Applying the local existence theorem with initial data $(x_{\infty},\dot{x}_{\infty})$, we can extend the maximal solution to a strictly larger interval, a clear contradiction. Hence $T_{\max}=+\infty,$ which completes the proof.
\end{proof}

\section{Auxiliary results}
In this appendix, we collect some lemmas and technical results which we will use in the analysis of the dynamical system \eqref{DynSys}.

The following statement is the continuous counterpart of a convergence result
of quasi-Fej\'er monotone sequences. For its proofs we refer to \cite[Lemma 5.1]{abbas-att-sv}.

\begin{lemma}\label{fejer-cont1} Suppose that $F:[t_0,+\infty)\rightarrow\R$ is locally absolutely continuous and bounded from below and that
there exists $G\in L^1([t_0,+\infty))$ such that
$$\frac{d}{dt}F(t)\leq G(t)$$
for almost every $t \in [t_0,+\infty)$. Then there exists $\lim_{t\To +\infty} F(t)\in\R$.
\end{lemma}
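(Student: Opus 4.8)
The plan is to reduce the statement to the elementary fact that a nonincreasing function that is bounded from below admits a finite limit. To this end I would introduce the auxiliary function
$$H(t)=F(t)-\int_{t_0}^t G(s)\,ds.$$
This $H$ is locally absolutely continuous, being the difference of $F$ (which is locally absolutely continuous by hypothesis) and the primitive of an $L^1$ function (which is absolutely continuous). Differentiating almost everywhere and using the hypothesis $\dot F(t)\le G(t)$ gives $\dot H(t)=\dot F(t)-G(t)\le 0$ for almost every $t\in[t_0,+\infty)$.

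The next step is to upgrade this pointwise inequality to genuine monotonicity of $H$. Since $H$ is locally absolutely continuous, the fundamental theorem of calculus applies, so for any $t_0\le s\le t$ one has
$$H(t)-H(s)=\int_s^t \dot H(\tau)\,d\tau\le 0,$$
which shows that $H$ is nonincreasing. I would then observe that $H$ is bounded from below: because $G\in L^1([t_0,+\infty))$, the primitive $t\mapsto\int_{t_0}^t G(s)\,ds$ is bounded and converges, as $t\to+\infty$, to the finite number $\int_{t_0}^{+\infty}G(s)\,ds$; combining with the lower bound $F\ge m$ furnished by the hypothesis yields $H(t)\ge m-\sup_{t}\left|\int_{t_0}^t G(s)\,ds\right|>-\infty$. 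A nonincreasing function bounded from below has a finite limit, so $\ell:=\lim_{t\to+\infty}H(t)\in\R$ exists.

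Finally, writing $F(t)=H(t)+\int_{t_0}^t G(s)\,ds$ and letting $t\to+\infty$ gives
$$\lim_{t\to+\infty}F(t)=\ell+\int_{t_0}^{+\infty}G(s)\,ds\in\R,$$
which is exactly the assertion. There is no genuine obstacle in this argument; the only point deserving care is the passage from ``$\dot H\le 0$ almost everywhere'' to ``$H$ nonincreasing'', which rests on the validity of the fundamental theorem of calculus for locally absolutely continuous functions (rather than merely differentiable ones), together with the integrability of $G$ ensuring that the primitive, and hence $H$, is well defined and that $\int_{t_0}^t G$ converges.
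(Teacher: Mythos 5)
Your proof is correct. There is, however, no in-paper argument to compare it against: the paper does not prove Lemma \ref{fejer-cont1} at all, but simply refers the reader to \cite[Lemma 5.1]{abbas-att-sv}. Your reduction --- setting $H(t)=F(t)-\int_{t_0}^t G(s)\,ds$, observing that $H$ is locally absolutely continuous with $\dot H\le 0$ almost everywhere, invoking the fundamental theorem of calculus for locally absolutely continuous functions to upgrade this to genuine monotonicity, and then using $G\in L^1([t_0,+\infty))$ twice (once to bound $H$ from below via the uniform bound $\bigl|\int_{t_0}^t G(s)\,ds\bigr|\le\int_{t_0}^{+\infty}|G(s)|\,ds$, and once to pass to the limit in the primitive) --- is the standard argument for this quasi-Fej\'er-type lemma, and is essentially the argument of the cited reference. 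You also correctly identify the one delicate point: the implication from ``$\dot H\le 0$ a.e.'' to ``$H$ nonincreasing'' genuinely needs absolute continuity (for a merely a.e.\ differentiable function this step fails, as Cantor-type functions show), so flagging the role of the FTC there is exactly right. In short: a complete, self-contained proof where the paper offers only a citation.
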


The continuous version of  the Opial Lemma (see \cite{att-c-p-r-math-pr2018}) is the main tool for proving weak convergence for the generated trajectory.

\begin{lemma}\label{Opial}
Let $S \subseteq \mathcal{H}$ be  a nonempty set and $x : [t_0, +\infty) \to H$ a given map such that:
\begin{align*}
& (i) \quad \mbox{for every }z \in S \ \mbox{the limit} \ \lim\limits_{t \To +\infty} \| x(t) - z \|  \ \mbox{exists};\\
& (ii) \quad \text{every weak sequential limit point of } x(t) \text{ belongs to the set }S.
\end{align*}
Then the trajectory $x(t)$ converges weakly to an element in $S$ as $t \to + \infty$.
\end{lemma}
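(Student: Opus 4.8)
The plan is to carry out the classical Opial argument in three movements: boundedness together with existence of weak cluster points, uniqueness of the weak cluster point, and the passage from uniqueness to genuine weak convergence. Throughout I work in the Hilbert space $\mathcal{H}$ and use only hypotheses $(i)$ and $(ii)$.

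First I would fix a single $z\in S$ (possible since $S$ is nonempty) and invoke $(i)$: the limit $\lim_{t\To+\infty}\|x(t)-z\|$ exists and is finite, so $t\mapsto\|x(t)-z\|$ is bounded and hence $x(t)$ is bounded on $[t_0,+\infty)$. Because $\mathcal{H}$ is a Hilbert space, bounded sets are weakly sequentially relatively compact; thus along any sequence $t_n\To+\infty$ one can extract a weakly convergent subsequence, so the set of weak sequential limit points of $x(t)$ is nonempty, and by $(ii)$ each such point lies in $S$.

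The heart of the proof is to show that this set is a singleton, and I expect this uniqueness step to be the main obstacle. Suppose $\ol x_1$ and $\ol x_2$ are two weak sequential limit points; by $(ii)$ both lie in $S$, so by $(i)$ both limits $\lim_{t\To+\infty}\|x(t)-\ol x_1\|$ and $\lim_{t\To+\infty}\|x(t)-\ol x_2\|$ exist. Expanding the squared norms and subtracting yields
\begin{equation*}
\|x(t)-\ol x_1\|^2-\|x(t)-\ol x_2\|^2=-2\<x(t),\ol x_1-\ol x_2\>+\|\ol x_1\|^2-\|\ol x_2\|^2 ,
\end{equation*}
so the left-hand side having a limit forces $\<x(t),\ol x_1-\ol x_2\>$ to converge to some $\ell\in\R$ as $t\To+\infty$. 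Evaluating this limit along a subsequence realizing $\ol x_1$ gives $\ell=\<\ol x_1,\ol x_1-\ol x_2\>$, while along a subsequence realizing $\ol x_2$ gives $\ell=\<\ol x_2,\ol x_1-\ol x_2\>$; subtracting the two identities produces $\|\ol x_1-\ol x_2\|^2=0$, hence $\ol x_1=\ol x_2$. The decisive ingredient here is precisely that $(i)$ supplies the existence of the limit simultaneously for both candidate limit points.

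Finally I would upgrade ``unique weak cluster point'' to weak convergence by a standard contradiction argument. If $x(t)$ did not converge weakly to the unique cluster point $\ol x$, there would exist $\xi\in\mathcal{H}$, some $\e>0$ and a sequence $t_n\To+\infty$ with $|\<x(t_n)-\ol x,\xi\>|\ge\e$ for all $n$; boundedness of $(x(t_n))$ then yields a further weakly convergent subsequence whose limit is a weak sequential limit point, hence equals $\ol x$, contradicting the lower bound. Therefore $x(t)$ converges weakly to $\ol x\in S$ as $t\To+\infty$, which is exactly the assertion of the lemma.
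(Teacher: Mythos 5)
Your proof is correct and complete: it is precisely the classical Opial argument, which is also what the paper relies on (the paper states this lemma without proof, citing the reference \cite{att-c-p-r-math-pr2018}, whose proof proceeds exactly as yours does). All three steps are sound — boundedness of the trajectory from hypothesis $(i)$, uniqueness of the weak sequential cluster point via the expansion of $\|x(t)-\ol x_1\|^2-\|x(t)-\ol x_2\|^2$ evaluated along the two realizing subsequences, and the subsequence-extraction contradiction that upgrades uniqueness of the cluster point to weak convergence of the whole trajectory.
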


Inspired from \cite{AP-max} Lemma A.6, we have the following result.

\begin{lemma}\label{A1} Let $t_0 > 0$, and let $w : [t_0,+\infty[\to\R$ be a continuously differentiable function which
is bounded from below. Consider $p,q,k: [t_0,+\infty[\to\R$  nonnegative functions and assume that the function $\frac{1}{\exp\left(\int_{t_0}^t \frac{q(s)}{p(s)}ds\right)} \in L^1(t_0,+\infty[$ and
$\left(\int_{t}^{+\infty}\frac{dT}{\exp\left(\int_{t_0}^T \frac{q(s)}{p(s)}ds\right)}\right)\frac{\exp\left(\int_{t_0}^t\frac{q(s)}{p(s)}ds\right)}{p(t)}k(t)\in L^1(t_0,+\infty[.$
Assume further, that
\begin{equation}\label{abst}
p(t)\ddot{w}(t) + q(t)\dot{w}(t)\le k(t)
\end{equation}
for some $\a > 0$, almost every $t > t_0$. Then,
the positive part $[\dot{w}]_+$ of $\dot{w}$ belongs to $L^1(t_0,+\infty[$, and $\lim_{t\to+\infty} w(t)$ exists.
\end{lemma}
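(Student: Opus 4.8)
The plan is to use an integrating factor to convert the second-order differential inequality \eqref{abst} into a first-order monotonicity statement, extract from it that the positive part $[\dot w]_+$ is integrable, and then deduce the existence of $\lim_{t\to+\infty} w(t)$ by a standard decomposition argument. Set $\rho(t)=\exp\left(\int_{t_0}^t \frac{q(s)}{p(s)}ds\right)$, so that $\rho>0$ and $\rho'(t)=\frac{q(t)}{p(t)}\rho(t)$ (this presupposes $p>0$ almost everywhere, which is implicit in the hypotheses since $\int q/p$ must be defined). Multiplying \eqref{abst} by the positive factor $\rho(t)/p(t)$ turns the left-hand side into an exact derivative, giving $\frac{d}{dt}\left(\rho(t)\dot w(t)\right)\le \frac{\rho(t)}{p(t)}k(t)$ for almost every $t$.

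First I would integrate this from $t_0$ to $t$ to obtain the pointwise upper bound $\dot w(t)\le \frac{1}{\rho(t)}\left(\rho(t_0)\dot w(t_0)+\int_{t_0}^t \frac{\rho(s)k(s)}{p(s)}ds\right)$. Since $\rho>0$, passing to positive parts yields $[\dot w(t)]_+\le \frac{1}{\rho(t)}\left(C_0+\int_{t_0}^t \frac{\rho(s)k(s)}{p(s)}ds\right)$ with $C_0=|\rho(t_0)\dot w(t_0)|$. Integrating over $[t_0,+\infty[$, the first term contributes $C_0\int_{t_0}^{+\infty}\frac{dt}{\rho(t)}$, which is finite by the first hypothesis. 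For the second term, all integrands are nonnegative, so Tonelli's theorem permits swapping the order of integration: $\int_{t_0}^{+\infty}\frac{1}{\rho(t)}\int_{t_0}^t \frac{\rho(s)k(s)}{p(s)}ds\,dt=\int_{t_0}^{+\infty}\frac{\rho(s)k(s)}{p(s)}\left(\int_s^{+\infty}\frac{dt}{\rho(t)}\right)ds$, and the right-hand side is precisely the quantity assumed to lie in $L^1(t_0,+\infty[$ in the second hypothesis. Hence $[\dot w]_+\in L^1(t_0,+\infty[$.

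The concluding step turns integrability of $[\dot w]_+$ into convergence of $w$. I would set $\tilde w(t)=w(t)-\int_{t_0}^t [\dot w(s)]_+\,ds$; then $\dot{\tilde w}=\dot w-[\dot w]_+=-[\dot w]_-\le 0$, so $\tilde w$ is nonincreasing, and it is bounded from below because $w$ is bounded from below and $\int_{t_0}^{+\infty}[\dot w]_+<+\infty$. Therefore $\lim_{t\to+\infty}\tilde w(t)$ exists in $\R$, and since $\int_{t_0}^t [\dot w]_+\,ds$ converges as $t\to+\infty$, the identity $w=\tilde w+\int_{t_0}^{\cdot}[\dot w]_+$ shows that $\lim_{t\to+\infty}w(t)$ exists. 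The main obstacle is organizing the integrating-factor computation so that the two a priori technical-looking integrability assumptions on $\rho$ appear exactly where needed: the first controls the ``free'' term coming from the initial data, while the Tonelli interchange is engineered precisely so that the second hypothesis absorbs the double integral generated by the forcing term $k$.
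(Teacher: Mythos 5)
Your proposal is correct and follows essentially the same route as the paper: the same integrating factor $\exp\bigl(\int_{t_0}^t q/p\,ds\bigr)$, the same integration and passage to positive parts, and the same Fubini/Tonelli interchange that makes the two integrability hypotheses land exactly where needed. Your final decomposition $\tilde w = w - \int_{t_0}^{\cdot}[\dot w]_+$ simply spells out the last step that the paper states without detail.
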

\begin{proof}
By multiplying \eqref{abst} with $\frac{1}{p(t)}\exp\left(\int_{t_0}^t \frac{q(s)}{p(s)}ds\right)$ we get

$$\frac{d}{dt}\left[\exp\left(\int_{t_0}^t\frac{q(s)}{p(s)}ds\right)\dot{w}(t)\right]\le \frac{1}{p(t)}\exp\left(\int_{t_0}^t\frac{q(s)}{p(s)}ds\right)k(t).$$
By integrating the above relation on an interval $[t_0, T]$ we obtain
\begin{equation}\label{abst1}
\exp\left(\int_{t_0}^T \frac{q(s)}{p(s)}ds\right)\dot{w}(T)\le \dot{w}(t_0)+\int_{t_0}^T \frac{1}{p(t)}\exp\left(\int_{t_0}^t\frac{q(s)}{p(s)}ds\right)k(t)dt.
\end{equation}

Consequently,

\begin{equation}\label{abst2}
[\dot{w}]_+(T)\le \frac{|\dot{w}(t_0)|}{\exp\left(\int_{t_0}^T \frac{q(s)}{p(s)}ds\right)}+\frac{\int_{t_0}^T \frac{1}{p(t)}\exp\left(\int_{t_0}^t\frac{q(s)}{p(s)}ds\right)k(t)dt}{\exp\left(\int_{t_0}^T \frac{q(s)}{p(s)}ds\right)},
\end{equation}
hence
\begin{equation}\label{abst3}
\int_{t_0}^{+\infty}[\dot{w}]_+(T)dT\le \int_{t_0}^{+\infty}\frac{|\dot{w}(t_0)|}{\exp\left(\int_{t_0}^T \frac{q(s)}{p(s)}ds\right)}dT+\int_{t_0}^{+\infty}\frac{\int_{t_0}^T \frac{1}{p(t)}\exp\left(\int_{t_0}^t\frac{q(s)}{p(s)}ds\right)k(t)dt}{\exp\left(\int_{t_0}^T \frac{q(s)}{p(s)}ds\right)}dT.
\end{equation}
Now, from the hypotheses we have
$$\int_{t_0}^{+\infty}\frac{|\dot{w}(t_0)|}{\exp\left(\int_{t_0}^T \frac{q(s)}{p(s)} ds\right)}dT<+\infty.$$
According to Fubini's theorem
$$\int_{t_0}^{+\infty}\frac{\int_{t_0}^T \frac{1}{p(t)}\exp\left(\int_{t_0}^t\frac{q(s)}{p(s)}ds\right)k(t)dt}{\exp\left(\int_{t_0}^T \frac{q(s)}{p(s)}ds\right)}dT=\int_{t_0}^{+\infty}\left(\int_{t}^{+\infty}\frac{dT}{\exp\left(\int_{t_0}^T \frac{q(s)}{p(s)}ds\right)}\right)\frac{\exp\left(\int_{t_0}^t\frac{q(s)}{p(s)}ds\right)}{p(t)}k(t)dt.$$
But, according to the hypotheses
$$\int_{t_0}^{+\infty}\left(\int_{t}^{+\infty}\frac{dT}{\exp\left(\int_{t_0}^T \frac{q(s)}{p(s)}ds\right)}\right)\frac{\exp\left(\int_{t_0}^t\frac{q(s)}{p(s)}ds\right)}{p(t)}k(t)dt<+\infty, $$
consequently
$$\int_{t_0}^{+\infty}[\dot{w}]_+(T)dT<+\infty.$$
This implies that $\lim_{t\to+\infty} w(t)$ exists.
\end{proof}

\end{document}